\newcommand{\Mod}[1]{\ (\mathrm{mod}\ #1)}
\tikzstyle{vertex}=[auto=left,circle,draw=black,fill=white, inner sep=1.5]
\newtheorem{theorem}{Theorem}[section]
\newtheorem{cor}[theorem]{Corollary}
\newtheorem{lema}{Lemma} [section]
\title{Integral mixed cayley graph over abelian group }
\author{ Monu Kadyan\\
Department of Mathematics\\
Indian Institute of Technology Guwahati\\
Guwahati, India - 781039\\
Email: monu.kadyan@iitg.ac.in\\
\\
Bikash Bhattacharjya\\
Department of Mathematics\\
Indian Institute of Technology Guwahati\\
Guwahati, India - 781039\\
Email: b.bikash@iitg.ac.in 
}
\date{}
\begin{document}
\maketitle

\vspace{-0.3in}

	\begin{center}{\textbf{Abstract}}\end{center}

A mixed graph is said to be \textit{integral} if all the eigenvalues of its Hermitian adjacency matrix are integer. Let $\Gamma$ be an abelian group. The \textit{mixed Cayley graph} $Cay(\Gamma,S)$ is a mixed graph on the vertex set $\Gamma$ and edge set $\{ (a,b): b-a\in S \}$, where $0\not\in S$. We characterize integral mixed Cayley graph $Cay(\Gamma,S)$ over abelian group in terms of its symbol set $S$.

\vspace*{0.3cm}
\noindent 
\textbf{Keywords.} mixed graph; integral mixed graph; mixed Cayley graph; mixed graph spectrum. \\
\textbf{Mathematics Subject Classification:} 05C50, 05C25.

\section{Introduction}
We only consider graphs without loops and  multi-edges. A  \textit{(simple) graph} $G$ is denoted by $G=(V(G),E(G))$, where $V(G)$ and $E(G)$ are the vertex set and edge set of $G$, respectively. Here $E(G)\subset V(G) \times V(G)\setminus \{(u,u)|u\in V(G)\}$ such that $(u,v)\in E(G)$ if and only if $(v,u)\in E(G)$. 
A graph $G$ is said to be \textit{oriented} if $(u,v)\in E(G)$ implies that $(v,u)\not\in E(G)$. A graph $G$ is said to be \textit{mixed} if $(u,v)\in E(G)$ does not always imply that $(v,u)\in E(G)$, see \cite{2015mixed} for details. In a mixed graph $G$, we call an edge with end vertices $u$ and $v$ to be \textit{undirected} (resp. \textit{directed}) if both $(u,v)$ and $(v,u)$ belong to $E(G)$ (resp. only one of $(u,v)$ and $(v,u)$ belongs to $E(G)$). An undirected edge $(u,v)$ is denoted by $u\leftrightarrow v$, and a directed edge $(u,v)$ is denoted by $u\rightarrow v$. A mixed graph can have both directed and undirected edges. Note that, if all edges of a mixed graph G are directed (resp. undirected) then G is an oriented graph (resp. a simple graph). For a mixed graph $G$, the underlying graph $G_{U}$ of $G$ is the simple undirected graph in which all edges of $G$ are considered undirected. By the terms of order, size, number of components, degree of a vertex, distance between two vertices etc., we mean that they are the same as in their underlying graphs.

In 2015, Liu and Li \cite{2015mixed} introduced Hermitian adjacency matrix of a mixed graph. For a mixed graph with $n$ vertices, its \textit{Hermitian adjacency matrix} is denoted by $H(G)=(h_{uv})_{n\times n}$, where $h_{uv}$ is given by
$$h_{uv} = \left\{ \begin{array}{rl}
	1 &\mbox{ if }
	(u,v)\in E \textnormal{ and } (v,u)\in E, \\ i & \mbox{ if } (u,v)\in E \textnormal{ and } (v,u)\not\in E ,\\
	-i & \mbox{ if } (u,v)\not\in E \textnormal{ and } (v,u)\in E,\\
	0 &\textnormal{ otherwise.}
\end{array}\right.$$ 
Here $i=\sqrt{-1}$ is the imaginary number unit. Hermitian adjacency matrix of a mixed graph incorporates both adjacency matrix of simple graph and skew adjacency matrix of an oriented graph. The Hermitian spectrum of $G$, denoted by $Sp_H(G)$, is the multi set of the eigenvalues of $H(G)$. It is easy to see that $H(G)$ is a Hermitian matrix and so $Sp_H(G)\subseteq \mathbb{R}$. 

A mixed graph is said to be \textit{integral} if all the eigenvalues of its Hermitian adjacency matrix are integers. Integral graphs were first defined by Harary and Schwenk in 1974 \cite{harary1974graphs} and proposed a classification of integral graphs. See \cite{balinska2002survey} for a survey on integral graphs. 

Let $\Gamma$ be a group, $S \subseteq \Gamma$ and $S$ does not contain the identity element of $\Gamma$. The set $S$ is said to be \textit{symmetric} (resp. \textit{skew-symmetric}) if $S$ is closed under inverse (resp. $a^{-1} \not\in S$ for all $a\in S$). Define $\overline{S}= \{u\in S: u^{-1}\not\in S \}$. Clearly $S\setminus \overline{S}$ is symmetric and $\overline{S}$ is skew-symmetric. The \textit{mixed Cayley graph} $G=Cay(\Gamma,S)$ is a mixed graph, where $V(G)=\Gamma$ and $E(G)=\{ (a,b):a,b\in \Gamma, ba^{-1}\in S \}$.  Since we have not assumed that $S$ is symmetric, so a mixed Cayley graph can have directed edges. If $S$ is symmetric, then $G$ is a (simple) \textit{Cayley graph}. If $S$ is skew-symmetric then $G$ is an \textit{oriented Cayley graph}.

In 1982, Bridge and Mena \cite{bridges1982rational} introduced a characterization of integral Cayley graphs over abelian groups. Later on, the exact characterization was rediscovered by Wasin So \cite{2006integral} for cyclic groups in 2005.
In 2009, Abdollahi and Vatandoost \cite{abdollahi2009cayley} proved that there are exactly seven connected cubic integral Cayley graphs. In the same year, Klotz and Sander \cite{klotz2010integral} proved that if the Cayley graph $Cay(\Gamma,S)$ over abelian group $\Gamma$ is integral, then $S$ belongs to the Boolean algebra $\mathbb{B}(\Gamma)$ generated by the subgroups of $\Gamma$, and its converse proved by Alperin and Peterson \cite{alperin2012integral}. In 2014, Cheng et al. \cite{ku2015cayley} proved that normal Cayley graphs (its generating set $S$ is closed under conjugation) of symmetric groups are integral. Alperin \cite{alperin2014rational} gave a characterization of integral Cayley graphs over finite groups.  In 2017, Lu et al. \cite{lu2018integral} gave necessary and sufficient conditions for the integrality of Cayley graphs over dihedral groups $D_n$. In particular, they completely determined all integral Cayley graphs of the dihedral group $D_p$ for a prime $p$. In 2019, Cheng et al.\cite{cheng2019integral} obtained several simple sufficient conditions for the integrality of Cayley graphs over dicyclic group $T_{4n}= \langle a,b| a^{2n}=1, a^n=b^2,b^{-1}ab=a^{-1} \rangle $. In particular, they also completely determined all integral Cayley graphs over the dicyclic group $T_{4p}$ for a prime $p$. In \cite{kadyan2021integral}, the authors have characterized integral mixed circulant graphs in terms of their symbol set. In this paper, we give a characterization of integral mixed Cayley graph over abelian groups in terms if its symbol set. In what folows, $\Gamma$ is always taken to be a finite abelian group.

This paper is organized as follows. In second section, we express the eigenvalues of a mixed Cayley graph as a sum of eigenvalues of a simple Cayley graph and an oriented Cayley graph. In third section, we obtain a sufficient condition on the symbol set $S$ for integrality of the mixed Cayley graph $Cay(\Gamma,S)$ over abelian group $\Gamma$. In fourth section, we prove the necessity of the sufficient condition obtained in Section 3.


\section{Mixed Cayley graph and group characters} 

A \textit{representation} of a finite group $\Gamma$ is a homomorphism $\rho : \Gamma \to GL(V)$, where $GL(V)$ is  the group of automorphisms of a finite dimensional vector space $V$ the complex field $\mathbb{C}$. The dimension of $V$ is called the \textit{degree} of $\rho$. Two representations $\rho_1$ and $\rho_2$ of $\Gamma$ on $V_1$ and $V_2$, respectively, are \textit{equivalent} if there is an isomorphism $T:V_1 \to V_2$ such that $T\rho_1(g)=\rho_2(g)T$ for all $g\in \Gamma$.

Let $\rho : \Gamma \to GL(V)$ be a representation. The \textit{character} $\chi_{\rho}: \Gamma \to \mathbb{C}$ of $\rho$ is defined by setting $\chi_{\rho}(g)=Tr(\rho(g))$ for $g\in \Gamma$, where $Tr(\rho(g))$ is the trace of the representation matrix of $\rho(g)$. By degree of $\chi_{\rho}$ we mean the degree of $\rho$ which is simply $\chi_{\rho}(1)$. If $W$ is a $\rho(g)$-invariant subspace of $V$ for each $g\in \Gamma$, then we say $W$ a $\rho(\Gamma)$-invariant subspace of $V$. If the only $\rho(\Gamma)$-invariant subspaces of $V$ are $\{ 0\}$ and $V$,  we say $\rho$ an \textit{irreducible representation} of $\Gamma$, and the corresponding character $\chi_{\rho}$ an \textit{irreducible character} of $\Gamma$. 

For a group $\Gamma$, we denote by $IRR(\Gamma)$ and $Irr(\Gamma)$ the complete set of non-equivalent irreducible representations of $\Gamma$  and the complete set of non-equivalent irreducible characters of $\Gamma$, respectively.

 Let $\Gamma$ be a finite abelian group under addition with $n$ elements, and $S$ be a subset of $\Gamma$ with $0\not\in S$, where $0$ is the additive identity of $\Gamma$.  Then $\Gamma$ is isomorphic to the direct product of cyclic groups of prime power order, $i.e.$ 
 $$\Gamma\cong \mathbb{Z}_{n_1} \otimes \cdots \otimes \mathbb{Z}_{n_k},$$
 where $n=n_1 \cdots n_k$, and $n_j$ is a power of a prime number for each $j=1,...,k  $. We consider an abelian group $\Gamma$ as $\mathbb{Z}_{n_1} \otimes \cdots \otimes \mathbb{Z}_{n_k}$ of order $n=n_1...n_k$. The \textit{exponent} of $\Gamma$ is defined to be the least common multiple of $n_1,n_2,...,n_k$, denoted by $exp(\Gamma)$. We consider the elements $x\in \Gamma $ as elements of the cartesian product $\mathbb{Z}_{n_1} \otimes \cdots \otimes \mathbb{Z}_{n_k}$, $i.e.$ 
 $$x=(x_1,x_2,...,x_k),  \mbox{ where } x_j \in \mathbb{Z}_{n_j} \mbox{ for all } 1\leq j \leq k. $$
Addition in $\Gamma$ is done coordinate-wise modulo $n_j$. For a positive integer $k$ and $a\in \Gamma$ we denote by $ka$ or $a^k$ the $k$-fold sum of $a$ to itself, $(-k)a=k(-a)$, $0a=0$, and inverse of $a$ by $-a$. 

\begin{lema}\label{lemma1}\cite{steinberg2009representation}
Let $\mathbb{Z}_n=\{ 0,1,...,n-1\}$ be a cyclic group of order $n$. Then $IRR(\mathbb{Z}_n)=\{ \phi_k: 0\leq k \leq n-1\}$, where $\phi_k(j)=w_n^{jk}$ for all $0\leq j,k \leq n-1$, and $w_n=\exp(\frac{2\pi i}{n})$.
\end{lema}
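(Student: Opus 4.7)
The plan is to prove this in four short movements: first construct the $\phi_k$'s, then show they are irreducible characters, then show they are mutually inequivalent, and finally show the list is exhaustive.

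First I would verify that each $\phi_k$ is a well-defined group homomorphism $\mathbb{Z}_n \to GL_1(\mathbb{C}) = \mathbb{C}^{\times}$. Since $\omega_n^{n} = 1$, the value $\omega_n^{jk}$ depends only on $j$ modulo $n$, so $\phi_k$ is well-defined on $\mathbb{Z}_n$. The homomorphism property is immediate from
\[
\phi_k(j_1 + j_2) \;=\; \omega_n^{(j_1+j_2)k} \;=\; \omega_n^{j_1 k}\,\omega_n^{j_2 k} \;=\; \phi_k(j_1)\,\phi_k(j_2).
\]
Because each $\phi_k$ has degree $1$, the only subspaces of $\mathbb{C}$ are $\{0\}$ and $\mathbb{C}$ itself, so $\phi_k$ is automatically irreducible, and $\phi_k$ coincides with its own character.

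Next I would check that the $\phi_k$ for $0 \le k \le n-1$ are pairwise non-equivalent. For one-dimensional representations, equivalence reduces to equality as functions (conjugation by a nonzero scalar is trivial). If $k \ne k'$ with $0 \le k, k' \le n-1$, then $\phi_k(1) = \omega_n^{k} \ne \omega_n^{k'} = \phi_{k'}(1)$, so they are distinct homomorphisms and hence inequivalent.

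The main obstacle, and the only non-cosmetic step, is proving that $\{\phi_0, \ldots, \phi_{n-1}\}$ is a complete list. I would invoke two standard facts from the representation theory of finite groups: (i) every irreducible complex representation of a finite abelian group has degree $1$ (a direct consequence of Schur's lemma, since every $\rho(g)$ must act as a scalar on an irreducible module), and (ii) the sum $\sum_{\rho \in IRR(\Gamma)} (\deg \rho)^2$ equals $|\Gamma|$. For $\Gamma = \mathbb{Z}_n$ this forces exactly $n$ inequivalent irreducible representations, all of degree $1$. Having already produced $n$ such inequivalent representations $\phi_0, \ldots, \phi_{n-1}$, we conclude $IRR(\mathbb{Z}_n) = \{\phi_k : 0 \le k \le n-1\}$, as claimed. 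Alternatively, one can argue directly: any homomorphism $\phi : \mathbb{Z}_n \to \mathbb{C}^{\times}$ is determined by $\phi(1)$, which must be an $n$-th root of unity, giving precisely $n$ possibilities, each realized by some $\phi_k$.
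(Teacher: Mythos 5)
Your proof is correct. The paper offers no proof of this lemma at all --- it is quoted as a standard fact from Steinberg's textbook --- so there is nothing to compare against except the canonical argument, which is exactly what you give: well-definedness and the homomorphism property, automatic irreducibility in degree one, distinctness via the values at the generator $1$, and completeness either by the counting identity $\sum_{\rho}(\deg\rho)^2 = |\Gamma|$ combined with Schur's lemma, or by the direct observation that a homomorphism $\mathbb{Z}_n \to \mathbb{C}^{\times}$ is determined by the image of $1$, which must be an $n$-th root of unity. Both of your completeness arguments are sound (the direct one still needs the Schur-type fact that every irreducible representation of an abelian group is one-dimensional, which you do acknowledge), so the write-up is complete as it stands.
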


\begin{lema}\label{lemma2}\cite{steinberg2009representation}
Let $\Gamma_1$,$\Gamma_2$ be abelian groups of order $m,n$, respectively. Let $IRR(\Gamma_1)=\{ \phi_1,...,\phi_m\}$, and $IRR(\Gamma_2)=\{ \rho_1,...,\rho_n\}$. Then $IRR(\Gamma_1 \times \Gamma_2)=\{ \psi_{kl} : 1\leq k \leq m, 1\leq l \leq n \}$, where $\psi_{kl}: \Gamma_1 \times \Gamma_2 \to \mathbb{C}^* \mbox{ and } \psi_{kl}(g_1,g_2)=\phi_k(g_1)\rho_l(g_2)$ for all $g_1\in \Gamma_1, g_2\in \Gamma_2$.
\end{lema}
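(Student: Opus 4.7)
The plan is to exploit the fact that since both $\Gamma_1$ and $\Gamma_2$ are abelian, every irreducible representation appearing in $IRR(\Gamma_1)$, $IRR(\Gamma_2)$, and $IRR(\Gamma_1\times\Gamma_2)$ is one-dimensional, so each such representation can be identified with a group homomorphism into $\mathbb{C}^*$. This reduces the problem to a purely multiplicative/character-theoretic computation, and removes any need to argue about invariant subspaces beyond the trivial case.

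The first step is to check that each $\psi_{kl}$ as defined is actually an element of $IRR(\Gamma_1\times\Gamma_2)$. I would verify the homomorphism property directly: for $(g_1,g_2),(h_1,h_2)\in\Gamma_1\times\Gamma_2$, using that $\phi_k$ and $\rho_l$ are homomorphisms, one gets
\[
\psi_{kl}((g_1,g_2)(h_1,h_2))=\phi_k(g_1h_1)\rho_l(g_2h_2)=\phi_k(g_1)\rho_l(g_2)\phi_k(h_1)\rho_l(h_2)=\psi_{kl}(g_1,g_2)\psi_{kl}(h_1,h_2).
\]
Since $\psi_{kl}$ is a one-dimensional representation, it is automatically irreducible.

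The second step is non-equivalence. For one-dimensional representations, equivalence coincides with equality, so I only need to show that distinct index pairs give distinct functions. If $\psi_{kl}=\psi_{k'l'}$, then evaluating at $(g_1,1_{\Gamma_2})$ forces $\phi_k=\phi_{k'}$, hence $k=k'$ by the hypothesis that the $\phi_i$ are pairwise non-equivalent; symmetrically $l=l'$.

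The final step is to verify we have the full list. Since $\Gamma_1\times\Gamma_2$ is abelian of order $mn$, the number of (equivalence classes of) irreducible representations equals $mn$, which is precisely the size of $\{\psi_{kl}:1\le k\le m,\ 1\le l\le n\}$. Combined with Steps 1 and 2, this forces the displayed set to be exactly $IRR(\Gamma_1\times\Gamma_2)$. There is no serious obstacle: the only subtle point is invoking the equality between the count of irreducible characters and the group order for abelian groups, which is a standard consequence of $\sum_{\chi\in Irr(\Gamma)}\chi(1)^2=|\Gamma|$ together with $\chi(1)=1$ for every irreducible character of an abelian group.
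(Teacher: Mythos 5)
Your proof is correct and complete. The paper itself gives no argument for this lemma --- it is quoted directly from Steinberg's book --- so there is nothing internal to compare against; your three steps (homomorphism check, distinctness via evaluation at $(g_1,1_{\Gamma_2})$ and $(1_{\Gamma_1},g_2)$, and the count $\sum_{\chi}\chi(1)^2=|\Gamma|$ specialized to the abelian case) constitute the standard proof of this fact and the one the cited reference essentially gives, restricted from the general tensor-product statement to the one-dimensional setting.
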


Consider $\Gamma = \mathbb{Z}_{n_1}\times \mathbb{Z}_{n_2}\times ...\times  \mathbb{Z}_{n_k}$. By Lemma \ref{lemma1} and Lemma \ref{lemma2}, $IRR(\Gamma)=\{ \psi_{\alpha}: \alpha \in \Gamma\}$, where $$\psi_{\alpha}(x)=\prod_{j=1}^{k}w_{n_j}^{\alpha_j x_j} \textnormal{ for all $\alpha=( \alpha_1,...,\alpha_k),x=(x_1,...,x_k) \in \Gamma$},$$ and $w_{n_j}=\exp(\frac{2\pi i}{n_j})$. Since $\Gamma$ is an abelian group, every irreducible representation of $\Gamma$ is 1-dimensional and thus it can be identified with its characters. Hence $IRR(\Gamma)=Irr(\Gamma)$. For $x\in \Gamma$, let $ord(x)$ denote the order of $x$. The following statements can be easily proved.

\begin{lema}\label{Basic}
Let $\Gamma$ be an abelian group of order $n$, and $Irr(\Gamma)=\{\psi_{\alpha} : \alpha \in \Gamma \}$ be the set of all $n$ characters of $\Gamma$. Then the following statements are true.
\begin{enumerate}[label=(\roman*)]
\item $\psi_{\alpha}(x)=\psi_x({\alpha})$ for all $x,\alpha \in \Gamma$.
\item $(\psi_{\alpha}(x))^{ord(x)}=(\psi_{\alpha}(x))^{ord(\alpha)}=1$ for all $x,\alpha \in \Gamma$.
\item $\psi_{\alpha}(x)^l=1$ for all $x,\alpha \in \Gamma$, where $l=exp(\Gamma)$.
\end{enumerate} 
\end{lema}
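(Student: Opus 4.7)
The plan is to deduce all three assertions directly from the explicit product formula $\psi_\alpha(x)=\prod_{j=1}^{k} w_{n_j}^{\alpha_j x_j}$ given just before the lemma, together with the elementary fact that each $\psi_\alpha$ is a group homomorphism $\Gamma\to\mathbb{C}^*$. There are no real obstacles; this is a short, computational verification, but it is worth laying out so the formulas can be invoked freely in later sections.

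For (i), I would just observe that multiplication in each factor $w_{n_j}^{\alpha_j x_j}$ is commutative in the exponent, so
\[
\psi_\alpha(x)=\prod_{j=1}^{k} w_{n_j}^{\alpha_j x_j}=\prod_{j=1}^{k} w_{n_j}^{x_j \alpha_j}=\psi_x(\alpha).
\]
This is a one-line check from the definition.

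For (ii), the key is that $\psi_\alpha$ is a homomorphism, which one verifies coordinatewise from the product formula: $\psi_\alpha(x+y)=\psi_\alpha(x)\psi_\alpha(y)$. Consequently $\psi_\alpha(x)^m=\psi_\alpha(mx)$ for every positive integer $m$. Taking $m=\mathrm{ord}(x)$ gives $\psi_\alpha(x)^{\mathrm{ord}(x)}=\psi_\alpha(0)=1$. For the second equality I would apply part (i) to rewrite $\psi_\alpha(x)=\psi_x(\alpha)$ and then repeat the argument with the roles of $\alpha$ and $x$ swapped, obtaining $\psi_x(\alpha)^{\mathrm{ord}(\alpha)}=\psi_x(\alpha\cdot \mathrm{ord}(\alpha))=\psi_x(0)=1$.

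For (iii), I would simply note that $\mathrm{ord}(x)$ divides $\exp(\Gamma)=l$ (this is standard: every element order in a finite abelian group divides the exponent). Writing $l=\mathrm{ord}(x)\cdot t$ for some positive integer $t$, we get
\[
\psi_\alpha(x)^l=\left(\psi_\alpha(x)^{\mathrm{ord}(x)}\right)^t=1^t=1,
\]
using (ii). This completes the proof. The statement labelled "easily proved" in the excerpt is indeed straightforward, and the only minor care needed is to be explicit about why $\psi_\alpha$ is a homomorphism, which follows immediately from the coordinatewise product formula.
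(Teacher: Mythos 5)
Your proof is correct; the paper itself offers no proof, merely remarking that the statements "can be easily proved," and your direct verification from the explicit character formula $\psi_\alpha(x)=\prod_{j=1}^{k}w_{n_j}^{\alpha_j x_j}$ is exactly the intended routine argument. All three parts check out, including the small but worthwhile point that $\psi_\alpha$ is a homomorphism (the exponents only matter modulo $n_j$ because $w_{n_j}^{n_j}=1$) and that $\mathrm{ord}(x)$ divides $\exp(\Gamma)$.
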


\begin{lema}\cite{klotz2010integral} \label{OrthogonalBasis}
Let $\Gamma=\{ v_1,...,v_n\}$ be an abelian group, and $X_{\alpha}= \begin{bmatrix} \psi_{\alpha} (v_1), &  \psi_{\alpha} (v_2), & \cdots & ,\psi_{\alpha} (v_n)\end{bmatrix}^t$ for all $\alpha \in \Gamma$. Then the set $\{ X_{\alpha} : \alpha \in \Gamma \}$ is an orthogonal basis of $\mathbb{C}^n$.
\end{lema}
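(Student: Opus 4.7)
The plan is to prove orthogonality and nondegeneracy of the collection $\{X_\alpha\}_{\alpha\in\Gamma}$; since there are exactly $n = |\Gamma|$ such vectors in $\mathbb{C}^n$, orthogonality of nonzero vectors will immediately yield a basis, so the entire task reduces to evaluating the Hermitian inner product $\langle X_\alpha, X_\beta\rangle$ for $\alpha,\beta\in\Gamma$.

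First I would write
\[
\langle X_\alpha, X_\beta\rangle \;=\; \sum_{j=1}^{n}\psi_\alpha(v_j)\,\overline{\psi_\beta(v_j)}.
\]
By Lemma \ref{Basic}(iii), each value $\psi_\beta(v_j)$ is a root of unity, so $\overline{\psi_\beta(v_j)} = \psi_\beta(v_j)^{-1}$. Using the explicit product formula $\psi_\alpha(x) = \prod_{j=1}^{k} w_{n_j}^{\alpha_j x_j}$ coming from Lemmas \ref{lemma1} and \ref{lemma2}, a direct coordinate-wise computation gives the homomorphism property $\psi_\alpha(x)\psi_\beta(x)^{-1} = \psi_{\alpha-\beta}(x)$ for all $x\in\Gamma$. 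Consequently,
\[
\langle X_\alpha, X_\beta\rangle \;=\; \sum_{x\in\Gamma}\psi_{\alpha-\beta}(x).
\]

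Second I would evaluate this character sum by the standard shift trick. If $\alpha=\beta$, then $\psi_{\alpha-\beta}=\psi_0$ is identically $1$, so the sum equals $n$; in particular $X_\alpha\neq 0$. If $\alpha\neq\beta$, set $\gamma = \alpha-\beta\neq 0$; since the characters separate points of $\Gamma$, there exists $y\in\Gamma$ with $\psi_\gamma(y)\neq 1$. Writing $T = \sum_{x\in\Gamma}\psi_\gamma(x)$ and using that $x\mapsto x+y$ is a bijection of $\Gamma$,
\[
\psi_\gamma(y)\,T \;=\; \sum_{x\in\Gamma}\psi_\gamma(x+y) \;=\; T,
\]
so $(\psi_\gamma(y)-1)T = 0$ forces $T=0$. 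Hence $\langle X_\alpha,X_\beta\rangle = n\,\delta_{\alpha\beta}$, and the $n$ nonzero mutually orthogonal vectors $X_\alpha$ form an orthogonal basis of $\mathbb{C}^n$.

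I do not anticipate a genuine obstacle here; the only point requiring care is verifying the homomorphism identity $\psi_\alpha\,\overline{\psi_\beta} = \psi_{\alpha-\beta}$ from the product formula (which is routine but needs the abelian structure and the fact that $w_{n_j}^{-1} = \overline{w_{n_j}}$). Everything else is the classical first orthogonality relation for characters applied to the abelian group $\Gamma$.
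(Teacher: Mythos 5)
Your proof is correct. The paper itself offers no proof of this lemma --- it is quoted from \cite{klotz2010integral} --- and your argument is the standard first orthogonality relation for characters of a finite abelian group: the homomorphism identity $\psi_\alpha\overline{\psi_\beta}=\psi_{\alpha-\beta}$ follows from the explicit product formula, the shift trick kills the character sum for $\alpha\neq\beta$, and counting $n$ pairwise orthogonal nonzero vectors in $\mathbb{C}^n$ finishes the job. Nothing is missing; the only implicit point worth a word is that a nonzero $\gamma$ has some nonzero coordinate $\gamma_j$, so evaluating $\psi_\gamma$ at the $j$-th standard generator produces the required value $\neq 1$.
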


\begin{lema}\label{IntegralChar}\label{imcgoa3}
Let $\Gamma$ be an abelian group. Then the spectrum of the mixed Cayley graph $Cay(\Gamma, S)$ is \linebreak[4] $\{ \gamma_\alpha : \alpha \in \Gamma \}$, where  $\gamma_{\alpha}=\lambda_{\alpha}+\mu_{\alpha}$, and $$\lambda_{\alpha}=\sum_{s\in S\setminus \overline{S}} \psi_{\alpha}(s),\hspace{1cm} \mu_{\alpha}=i\sum_{s\in\overline{S}}\bigg( \psi_{\alpha}(s)-\psi_{\alpha}(-s)\bigg).$$
\end{lema}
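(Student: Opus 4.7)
The plan is to exhibit each vector $X_\alpha$ from Lemma \ref{OrthogonalBasis} as an eigenvector of $H(G)$ with eigenvalue $\gamma_\alpha$, and then use the fact that $\{X_\alpha : \alpha \in \Gamma\}$ is an orthogonal basis of $\mathbb{C}^n$ (of the correct dimension $|\Gamma|$) to conclude that these $n$ numbers exhaust the spectrum with multiplicity.

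First I would unpack the Hermitian adjacency matrix $H = H(Cay(\Gamma,S))$ according to the decomposition $S = (S \setminus \overline{S}) \sqcup \overline{S}$. For $u,v \in \Gamma$ one has $h_{uv} = 1$ precisely when $v-u \in S \setminus \overline{S}$ (both $(u,v)$ and $(v,u)$ are edges), $h_{uv} = i$ precisely when $v-u \in \overline{S}$, and $h_{uv} = -i$ precisely when $u - v \in \overline{S}$. Fix $\alpha \in \Gamma$ and compute the $v$-th entry of $H X_\alpha$:
$$(HX_\alpha)_v = \sum_{s \in S\setminus \overline{S}} \psi_\alpha(v+s) \;+\; i \sum_{s \in \overline{S}} \psi_\alpha(v+s) \;-\; i \sum_{s \in \overline{S}} \psi_\alpha(v-s),$$
where in each sum I have substituted $s = u-v$ (respectively $s = v - u$ in the last sum) so that $s$ runs over the appropriate piece of $S$.

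Next I would apply the fact that $\psi_\alpha$ is a group homomorphism (equivalently, a one-dimensional character), so $\psi_\alpha(v \pm s) = \psi_\alpha(v)\,\psi_\alpha(\pm s)$. Factoring $\psi_\alpha(v)$ out of all three sums gives
$$(HX_\alpha)_v \;=\; \psi_\alpha(v) \left[ \sum_{s \in S\setminus \overline{S}} \psi_\alpha(s) \;+\; i \sum_{s \in \overline{S}} \bigl( \psi_\alpha(s) - \psi_\alpha(-s) \bigr) \right] \;=\; (\lambda_\alpha + \mu_\alpha)\, \psi_\alpha(v),$$
which is exactly $\gamma_\alpha \,(X_\alpha)_v$. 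Hence $H X_\alpha = \gamma_\alpha X_\alpha$ for every $\alpha \in \Gamma$.

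Finally, Lemma \ref{OrthogonalBasis} supplies $|\Gamma| = n$ linearly independent eigenvectors $X_\alpha$, matching the size of $H$, so $\{\gamma_\alpha : \alpha \in \Gamma\}$ is the full spectrum. There is no real obstacle here: the argument is essentially the standard diagonalization of a Cayley-type matrix by group characters, and the only point requiring care is the index-substitution bookkeeping that correctly separates the $+i$ and $-i$ contributions coming from $\overline{S}$ (which is why the $\mu_\alpha$ term involves the difference $\psi_\alpha(s) - \psi_\alpha(-s)$ rather than a sum).
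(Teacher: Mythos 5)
Your proposal is correct and follows essentially the same route as the paper: compute the entrywise action of $H(Cay(\Gamma,S))$ on $X_\alpha$, reindex the three sums by $s=v-u$ or $s=u-v$ according to the decomposition $S=(S\setminus\overline{S})\sqcup\overline{S}$, factor out $\psi_\alpha(v)$ using multiplicativity of the character, and invoke Lemma \ref{OrthogonalBasis} to conclude the $\gamma_\alpha$ exhaust the spectrum. No gaps.
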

\begin{proof} For $\alpha \in \Gamma$, the $k$-th entry of $H(Cay(\Gamma,S))X_{\alpha}$ is given by 
\begin{equation*}
\begin{split}
\sum_{v_j \in \Gamma} h_{v_kv_j} \psi_{\alpha}(v_j) &= \sum_{1\leq j \leq n,v_j-v_k \in S \setminus \overline{S}} \psi_{\alpha}(v_j) +  \sum_{1\leq j \leq n,v_j-v_k \in \overline{S}} i\psi_{\alpha}(v_j) +  \sum_{1\leq j \leq n,v_k-v_j \in \overline{S}}(-i) \psi_{\alpha}(v_j) \\
&=  \sum_{s\in S \setminus \overline{S}} \psi_{\alpha}(s+v_k) + i \sum_{s \in \overline{S}} \psi_{\alpha}(s+v_k) -i  \sum_{s \in \overline{S}} \psi_{\alpha}(-s+v_k) \\
&= \bigg(  \sum_{s\in S\setminus \overline{S}} \psi_{\alpha}(s)+i\sum_{s\in\overline{S}}( \psi_{\alpha}(s)-\psi_{\alpha}(-s)) \bigg) \psi_{\alpha}(v_k)\\
&=(\lambda_{\alpha} + \mu_{\alpha}) \psi_{\alpha}(v_k)\\
&= \gamma_{\alpha}  \psi_{\alpha}(v_k).
\end{split}
\end{equation*} 
Thus $H(Cay(\Gamma,S))X_{\alpha}= \gamma_{\alpha} X_{\alpha}$, and so  $\gamma_{\alpha}$ is an eigenvalue of $H(Cay(\Gamma,S))$ with eigenvector $X_{\alpha}$. Using Lemma ~\ref{OrthogonalBasis}, the result follows. 
\end{proof}

Next two corollaries are special cases of Lemma~\ref{imcgoa3}.

\begin{cor}\cite{klotz2010integral}\label{klotz2010integral}
Let $\Gamma$ be an abelian group. Then the spectrum of the Cayley graph $Cay(\Gamma, S)$ is $\{ \lambda_\alpha : \alpha \in \Gamma \}$, where $\lambda_\alpha=\lambda_{-\alpha}$ and $$\lambda_{\alpha}=\sum_{s\in S} \psi_{\alpha}(s) \mbox{ for all } \alpha \in \Gamma.$$
\end{cor}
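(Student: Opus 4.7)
The plan is to derive the corollary as an immediate specialization of Lemma~\ref{imcgoa3}. Since $Cay(\Gamma,S)$ is a (simple) Cayley graph, by the definition recalled in Section~1 the symbol set $S$ must be symmetric, i.e., closed under inverses. Consequently $\overline{S}=\{u\in S:-u\notin S\}=\emptyset$ and $S\setminus\overline{S}=S$. Substituting this into the formulas of Lemma~\ref{imcgoa3} collapses the second sum to $\mu_\alpha=0$ and leaves $\lambda_\alpha=\sum_{s\in S}\psi_\alpha(s)$, so each $\gamma_\alpha=\lambda_\alpha$ is an eigenvalue and the eigenvalue multiset of $H(Cay(\Gamma,S))$ is exactly $\{\lambda_\alpha:\alpha\in\Gamma\}$.

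It remains to verify the symmetry $\lambda_\alpha=\lambda_{-\alpha}$. First I would record the elementary identity $\psi_{-\alpha}(s)=\psi_\alpha(-s)$ for all $\alpha,s\in\Gamma$. This is immediate from the explicit formula
\[
\psi_\alpha(x)=\prod_{j=1}^{k} w_{n_j}^{\alpha_j x_j},
\]
since replacing $\alpha$ by $-\alpha$ has the same effect on the product as replacing $x$ by $-x$; equivalently, $\psi_\alpha$ is a homomorphism into $\mathbb{C}^*$ whose values are roots of unity, so $\psi_\alpha(-s)=\psi_\alpha(s)^{-1}=\psi_{-\alpha}(s)$.

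With this identity in hand, I would write
\[
\lambda_{-\alpha}=\sum_{s\in S}\psi_{-\alpha}(s)=\sum_{s\in S}\psi_\alpha(-s)=\sum_{t\in S}\psi_\alpha(t)=\lambda_\alpha,
\]
where the third equality uses that $s\mapsto -s$ is a bijection of the symmetric set $S$ onto itself. This completes the proof sketch.

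There is no real obstacle here: the whole statement is essentially a bookkeeping specialization of Lemma~\ref{imcgoa3}, and the only point one must not forget is to justify that $S$ is symmetric (coming from the definition of a simple Cayley graph) and to supply the short character identity $\psi_{-\alpha}(s)=\psi_\alpha(-s)$ used to get the symmetry $\lambda_\alpha=\lambda_{-\alpha}$.
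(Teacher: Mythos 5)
Your proposal is correct and follows essentially the same route as the paper: specialize Lemma~\ref{imcgoa3} using $\overline{S}=\emptyset$ (which holds because $S$ is symmetric for a simple Cayley graph), and then obtain $\lambda_\alpha=\lambda_{-\alpha}$ from the identity $\psi_{-\alpha}(s)=\psi_\alpha(-s)$ together with the bijection $s\mapsto -s$ of $S$ onto itself. The paper's own proof is just a terser version of exactly this argument.
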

\begin{proof}
Note that $\overline{S}=\emptyset$ and use Lemma \ref{IntegralChar}. Also
\begin{equation*}
\begin{split}
\lambda_{\alpha}&=\sum_{s\in S} \psi_{\alpha}(s)=\sum_{s\in S} \psi_{-\alpha}(-s)=\sum_{s\in S} \psi_{-\alpha}(s)  =\lambda_{-\alpha}.
\end{split}
\end{equation*} 
\end{proof}

\begin{cor}\label{OriEig}
Let $\Gamma$ be an abelian group. Then the spectrum of the oriented Cayley graph $Cay(\Gamma, S)$ is $\{ \mu_\alpha : \alpha \in \Gamma \}$, where $\mu_\alpha=-\mu_{-\alpha}$ and $$\mu_{\alpha}=i\sum_{s\in S}\bigg( \psi_{\alpha}(s)-\psi_{\alpha}(-s)\bigg) \mbox{ for all } \alpha \in \Gamma.$$
\end{cor}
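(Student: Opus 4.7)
The plan is to treat Corollary~\ref{OriEig} as a direct specialization of Lemma~\ref{IntegralChar}, followed by a short symmetry argument for the identity $\mu_\alpha = -\mu_{-\alpha}$.

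First I would observe that an \emph{oriented} Cayley graph by definition has a skew-symmetric symbol set $S$, i.e.\ $s \in S$ implies $-s \notin S$. Unwinding the definition of $\overline{S} = \{u \in S : -u \notin S\}$ given in the introduction, this gives $\overline{S} = S$, and consequently $S \setminus \overline{S} = \emptyset$. Plugging this into Lemma~\ref{IntegralChar} immediately collapses $\gamma_\alpha = \lambda_\alpha + \mu_\alpha$ to $\gamma_\alpha = \mu_\alpha$, with $\lambda_\alpha = 0$ and the summation defining $\mu_\alpha$ ranging over all of $S$, which is exactly the claimed formula.

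For the symmetry $\mu_\alpha = -\mu_{-\alpha}$, the key ingredient is the character identity $\psi_{-\alpha}(s) = \psi_\alpha(-s)$ for all $s, \alpha \in \Gamma$. This follows from the explicit description $\psi_{\alpha}(x) = \prod_{j}w_{n_j}^{\alpha_j x_j}$, since replacing $\alpha$ by $-\alpha$ in the exponent has the same effect as replacing $x$ by $-x$; equivalently, one can note that $\psi_\alpha$ is a group homomorphism whose values are roots of unity, so $\psi_{-\alpha}(s) = \psi_\alpha(s)^{-1} = \overline{\psi_\alpha(s)} = \psi_\alpha(-s)$. Substituting this identity into the definition
\begin{equation*}
\mu_{-\alpha} = i\sum_{s \in S}\bigl(\psi_{-\alpha}(s) - \psi_{-\alpha}(-s)\bigr) = i\sum_{s \in S}\bigl(\psi_\alpha(-s) - \psi_\alpha(s)\bigr) = -\mu_\alpha,
\end{equation*}
yields the desired relation.

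There is no real obstacle here: the content of the corollary is packaged inside Lemma~\ref{IntegralChar} once one recognizes that $\overline{S} = S$ for oriented Cayley graphs, and the anti-symmetry in $\alpha$ is a one-line consequence of the standard conjugation property of abelian group characters. The only mild care required is the bookkeeping observation that skew-symmetry of $S$ indeed forces $\overline{S} = S$ rather than only $\overline{S} \subseteq S$, which is immediate from the definitions.
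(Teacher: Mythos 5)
Your proposal is correct and follows essentially the same route as the paper: note that skew-symmetry of $S$ forces $S\setminus\overline{S}=\emptyset$, specialize Lemma~\ref{IntegralChar}, and derive $\mu_\alpha=-\mu_{-\alpha}$ from the character identity $\psi_{-\alpha}(s)=\psi_\alpha(-s)$. The only difference is that you spell out the bookkeeping ($\overline{S}=S$) and the justification of the character identity, which the paper leaves implicit.
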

 \begin{proof}
Note that $S \setminus \overline{S}=\emptyset$ and use Lemma \ref{IntegralChar}. Also
 \begin{equation*}
\begin{split}
\mu_{\alpha}&=i\sum_{s\in {S}}\bigg( \psi_{\alpha}(s)-\psi_{\alpha}(-s)\bigg)=i\sum_{s\in {S}}\bigg( \psi_{-\alpha}(-s)-\psi_{-\alpha}(s)\bigg)=-\mu_{-\alpha}.
\end{split}
\end{equation*}
 \end{proof}

\begin{theorem}\label{smallchara}
Let $\Gamma$ be an abelian group. The mixed Cayley graph $Cay(\Gamma, S)$ is integral if and only if both Cayley graph $Cay(\Gamma, S\setminus \overline{S})$ and oriented Cayley graph $Cay(\Gamma,  \overline{S})$ are integral.
\end{theorem}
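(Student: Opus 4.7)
The plan is to read off the eigenvalues of all three graphs from the results just established. By Lemma~\ref{IntegralChar}, the eigenvalues of $Cay(\Gamma,S)$ are $\gamma_\alpha=\lambda_\alpha+\mu_\alpha$; by Corollary~\ref{klotz2010integral} the eigenvalues of the simple Cayley graph $Cay(\Gamma,S\setminus\overline{S})$ are precisely the $\lambda_\alpha$; and by Corollary~\ref{OriEig} the eigenvalues of the oriented Cayley graph $Cay(\Gamma,\overline{S})$ are precisely the $\mu_\alpha$. So the theorem is really an assertion about when $\lambda_\alpha+\mu_\alpha\in\mathbb{Z}$ for all $\alpha$ forces each summand to be an integer.

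The sufficiency direction ($\Leftarrow$) is immediate: if every $\lambda_\alpha$ and every $\mu_\alpha$ is an integer, then $\gamma_\alpha=\lambda_\alpha+\mu_\alpha\in\mathbb{Z}$.

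For necessity ($\Rightarrow$), the key ingredient is the symmetry already extracted in the two corollaries: $\lambda_{-\alpha}=\lambda_\alpha$ while $\mu_{-\alpha}=-\mu_\alpha$. Consequently
\[
\lambda_\alpha=\frac{\gamma_\alpha+\gamma_{-\alpha}}{2},\qquad \mu_\alpha=\frac{\gamma_\alpha-\gamma_{-\alpha}}{2}.
\]
If every $\gamma_\beta$ is an integer, then $\lambda_\alpha$ and $\mu_\alpha$ lie in $\tfrac{1}{2}\mathbb{Z}\subset\mathbb{Q}$.

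The main obstacle is that this only pins the two numbers down to half-integers. To upgrade to integers, I plan to invoke the fact that both $\lambda_\alpha$ and $\mu_\alpha$ are algebraic integers: $\lambda_\alpha$ is a sum of roots of unity $\psi_\alpha(s)$, and $\mu_\alpha$ is a sum of terms of the form $i\psi_\alpha(s)-i\psi_\alpha(-s)$, each of which is itself a root of unity (since $i$ and every $\psi_\alpha(s)$ are). A rational algebraic integer is an ordinary integer, so $\lambda_\alpha,\mu_\alpha\in\mathbb{Q}\cap\overline{\mathbb{Z}}=\mathbb{Z}$, which is what we need. (Alternatively, one could argue that the Hermitian adjacency matrix of $Cay(\Gamma,\overline{S})$ has entries in $\mathbb{Z}[i]$ while its characteristic polynomial, being monic with real eigenvalues, lies in $\mathbb{Z}[i][x]\cap\mathbb{R}[x]=\mathbb{Z}[x]$, so its eigenvalues are algebraic integers; likewise for the simple case.) Once this integrality-from-rationality step is in hand, the proof reduces to a two-line computation using the $\alpha\mapsto-\alpha$ symmetry.
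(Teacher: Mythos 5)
Your proposal is correct and follows essentially the same route as the paper: both use the decomposition $\gamma_\alpha=\lambda_\alpha+\mu_\alpha$ together with the symmetries $\lambda_{-\alpha}=\lambda_\alpha$ and $\mu_{-\alpha}=-\mu_\alpha$ to express $\lambda_\alpha$ and $\mu_\alpha$ as rational combinations of eigenvalues, and then conclude via the fact that a rational algebraic integer is an integer. (Only a cosmetic slip: a term $i\psi_\alpha(s)-i\psi_\alpha(-s)$ is a difference of two roots of unity, hence an algebraic integer, but not itself a root of unity; this does not affect the argument.)
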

\begin{proof}
Assume that the mixed Cayley graph $Cay(\Gamma, S)$ is integral. Let $\gamma_{\alpha}$ be an eigenvalue of mixed Cayley graph $Cay(\Gamma, S)$. By Lemma~\ref{IntegralChar}, Corollary~\ref{klotz2010integral} and Corollary~\ref{OriEig}, we have $\gamma_{\alpha}=\lambda_{\alpha}+\mu_{\alpha}$ and $\gamma_{-\alpha}=\lambda_{\alpha}-\mu_{\alpha}$ for all $\alpha \in \Gamma$, where $\lambda_{\alpha}$ is an eigenvalue of the Cayley graph $Cay(\Gamma, S\setminus \overline{S})$ and $\mu_{\alpha}$ is an eigenvalue of the oriented Cayley graph $Cay(\Gamma,  \overline{S})$. Thus $\lambda_{\alpha}=\frac{\gamma_{\alpha}+\gamma_{-\alpha}}{2} \in \mathbb{Q}$ and $\mu_{\alpha}=\frac{\gamma_{\alpha}-\gamma_{-\alpha}}{2} \in \mathbb{Q}$. As $\lambda_{\alpha}$ and $\mu_{\alpha}$ are rational algebraic integers, so $\lambda_{\alpha}, \mu_{\alpha}\in \mathbb{Q}$ implies that $\lambda_{\alpha}$ and $\mu_{\alpha}$ are integers. Thus the Cayley graph $Cay(\Gamma, S\setminus \overline{S})$ and the oriented Cayley graph $Cay(\Gamma,  \overline{S})$ are integral. 

Conversely, assume that both Cayley graph $Cay(\Gamma, S\setminus \overline{S})$ and oriented Cayley graph $Cay(\Gamma,  \overline{S})$ are integral. Then Lemma~\ref{IntegralChar} implies that $Cay(\Gamma, S)$ is integral.
\end{proof}

Let $n\geq 2$ be a fixed positive integer.  Define $G_n(d)=\{k: 1\leq k\leq n-1, \gcd(k,n)=d \}$. It is clear that $G_n(d)=dG_{\frac{n}{d}}(1)$.

Alperin and Peterson \cite{alperin2012integral} considered a Boolean algebra generated by a class of subgroups of a group in order to determine the integrality of Cayley graphs over abelian groups. Suppose $\Gamma$ is a finite group, and $\mathcal{F}_{\Gamma}$ is the family of all subgroups of $\Gamma$. The Boolean algebra $\mathbb{B}(\Gamma)$ generated by $\mathcal{F}_{\Gamma}$ is the set whose elements are obtained by arbitrary finite intersections, unions, and complements of the elements in the family $\mathcal{F}_{\Gamma}$. The minimal non-empty elements of this algebra are called \textit{atoms}. Thus each element of $\mathbb{B}(\Gamma)$ is the union of some atoms.
Consider the equivalence relation $\sim$ on $\Gamma$ such that $x\sim y$ if and only if $y=x^k$ for some $k\in G_m(1)$, where $m=ord(x)$.

\begin{lema}\cite{alperin2012integral}  The equivalence classes of $\sim$ are the atoms of $\mathbb{B}(\Gamma)$.
\end{lema}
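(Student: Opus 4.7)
The plan is to identify each equivalence class $[x]$ with the set of generators of the cyclic subgroup $\langle x\rangle$, to verify $[x]\in\mathbb{B}(\Gamma)$ by writing it as a Boolean combination of subgroups, and then to establish the key dichotomy that no member of $\mathbb{B}(\Gamma)$ can properly split an equivalence class.

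First I would unpack the definition: if $m=ord(x)$, then $[x]=\{x^k : 1\le k\le m-1,\ \gcd(k,m)=1\}$ is exactly the set of generators of $\langle x\rangle$, i.e.\ the elements of $\langle x\rangle$ of order precisely $m$. Since every proper subgroup of the cyclic group $\langle x\rangle$ consists of elements of order strictly less than $m$, one can write
\[
[x] \;=\; \langle x\rangle \;\setminus\; \bigcup_{H\lneq\langle x\rangle} H,
\]
which is a finite Boolean combination of subgroups of $\Gamma$. Hence $[x]\in\mathbb{B}(\Gamma)$, and these sets partition $\Gamma$ because $\sim$ is an equivalence relation.

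Next I would establish the separation property: for every subgroup $H\le\Gamma$, either $[x]\subseteq H$ or $[x]\cap H=\emptyset$. Indeed, if some $y\in[x]\cap H$, then $\langle y\rangle=\langle x\rangle\subseteq H$, so $H$ contains $x$ and hence every power of $x$; in particular all of $[x]$ lies in $H$. A straightforward induction on the construction of $\mathbb{B}(\Gamma)$, using that union, intersection, and complement each preserve the ``all-or-nothing'' dichotomy, then shows that for every $B\in\mathbb{B}(\Gamma)$ one has either $[x]\subseteq B$ or $[x]\cap B=\emptyset$.

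Finally I would conclude in both directions. Since $[x]$ is a non-empty element of $\mathbb{B}(\Gamma)$ that cannot be properly decomposed inside $\mathbb{B}(\Gamma)$, it is an atom. Conversely, any non-empty $B\in\mathbb{B}(\Gamma)$ contains some $x$ and therefore the whole class $[x]$, so $B$ is a union of equivalence classes; hence any atom must be a single class. The only delicate point in this argument is the induction showing that the dichotomy propagates through Boolean operations, but this is a one-line check for each of the three operations and I do not anticipate a genuine obstacle.
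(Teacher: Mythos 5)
Your proof is correct. The paper itself gives no argument for this lemma---it is quoted from Alperin and Peterson \cite{alperin2012integral}---and your three steps (each class $[x]=\langle x\rangle\setminus\bigcup_{H\lneq\langle x\rangle}H$ lies in $\mathbb{B}(\Gamma)$; every subgroup, and hence by induction every element of $\mathbb{B}(\Gamma)$, either contains $[x]$ or misses it; therefore the classes are exactly the minimal non-empty elements) reproduce the standard argument of that reference, so there is nothing to compare against within the paper and no gap to report.
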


For $x\in \Gamma$, let $[x]$ denote the equivalence class of $x$ with respect to the relation $\sim$. Also, let $\langle x \rangle$ denote the cyclic group generated by $x$.

\begin{lema}\label{atomsboolean} \cite{alperin2012integral}  The atoms of the Boolean algebra $\mathbb{B}(\Gamma)$ are the sets $[x]=\{ y: \langle y \rangle = \langle x \rangle \}$.
\end{lema}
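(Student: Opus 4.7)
The plan is to combine the previous lemma (equivalence classes of $\sim$ are precisely the atoms of $\mathbb{B}(\Gamma)$) with an elementary characterization of the relation $\sim$: I will show that, for any $x,y\in\Gamma$, we have $x\sim y$ if and only if $\langle x\rangle=\langle y\rangle$. Once this is established, the identification $[x]=\{y:\langle y\rangle=\langle x\rangle\}$ is immediate, and the stated description of the atoms follows.

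For the forward direction, suppose $y=x^k$ with $k\in G_m(1)$, where $m=ord(x)$; that is, $\gcd(k,m)=1$. Clearly $y\in\langle x\rangle$, so $\langle y\rangle\subseteq\langle x\rangle$. Since $\gcd(k,m)=1$, Bezout gives integers $l,t$ with $kl+mt=1$, whence $x=x^{kl+mt}=(x^k)^l(x^m)^t=y^l\in\langle y\rangle$, and so $\langle x\rangle\subseteq\langle y\rangle$. Hence $\langle x\rangle=\langle y\rangle$.

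For the reverse direction, suppose $\langle x\rangle=\langle y\rangle$. Then $y\in\langle x\rangle$, so $y=x^k$ for some integer $k$ with $0\le k\le m-1$, and $k\neq 0$ (since otherwise $y=0$ and $\langle x\rangle=\{0\}$, forcing $x=y=0$, a degenerate case we handle by noting $[0]=\{0\}$). Equality of the two cyclic groups forces $ord(y)=ord(x)=m$; but the order of $x^k$ in $\langle x\rangle$ is $m/\gcd(k,m)$, so $\gcd(k,m)=1$, i.e.\ $k\in G_m(1)$. Therefore $x\sim y$.

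The main obstacle, such as it is, is just being careful with the trivial equivalence class of the identity and with the convention $G_m(1)=\{k:1\le k\le m-1,\ \gcd(k,m)=1\}$ when $m=1$; all other steps are routine applications of Bezout and the order-of-a-power formula. Combining the two directions with Lemma on equivalence classes of $\sim$ being the atoms yields exactly $[x]=\{y:\langle y\rangle=\langle x\rangle\}$, as required.
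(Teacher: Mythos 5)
The paper does not prove this lemma at all --- it is imported verbatim from Alperin and Peterson \cite{alperin2012integral}, as is the preceding lemma identifying the atoms with the equivalence classes of $\sim$. So there is no in-paper argument to compare against; what matters is whether your derivation is sound, and it is. Reducing the statement to the claim that $x\sim y$ if and only if $\langle x\rangle=\langle y\rangle$ is exactly the right move, the forward direction via Bezout and the reverse direction via the order formula $ord(x^k)=m/\gcd(k,m)$ are both correct, and you have correctly flagged the only delicate point, namely the identity element: with the paper's literal definition $G_1(1)=\emptyset$, the relation $\sim$ is not reflexive at $0$, so one must adopt the convention $[0]=\{0\}$ (which is consistent with $\{0\}$ being the minimal subgroup and hence an atom of $\mathbb{B}(\Gamma)$). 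Your proof is complete and supplies a justification the paper leaves to the citation.
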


By Lemma \ref{atomsboolean}, each element of $\mathbb{B}(\Gamma)$ is a union of some sets of the form $[x]=\{ y: \langle y \rangle = \langle x \rangle \}$. Thus, for all $S\in \mathbb{B}(\Gamma)$, we have $S=[x_1]\cup...\cup [x_k]$ for some $x_1,...,x_k\in \Gamma$.

The next result provides a complete characterization of integral Cayley graphs over an abelian group $\Gamma$ in terms of the atoms of $\mathbb{B}(\Gamma)$. 

\begin{theorem}\label{Cayint} (\cite{alperin2012integral}, \cite{bridges1982rational})
Let $\Gamma$ be an abelian group. The Cayley graph $Cay(\Gamma, S)$ is integral if and only if $S\in \mathbb{B}(\Gamma)$.
\end{theorem}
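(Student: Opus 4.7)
The plan is to work directly with the eigenvalue formula $\lambda_\alpha = \sum_{s \in S} \psi_\alpha(s)$ from Corollary~\ref{klotz2010integral}. By Lemma~\ref{Basic}(iii), each value $\psi_\alpha(s)$ is an $n$-th root of unity for $n = exp(\Gamma)$, so every $\lambda_\alpha$ is an algebraic integer lying in the cyclotomic field $\mathbb{Q}(w_n)$. Since a rational algebraic integer is an ordinary integer, $Cay(\Gamma,S)$ is integral if and only if every $\lambda_\alpha$ is fixed by every Galois automorphism $\sigma_k \colon w_n \mapsto w_n^k$ of $\mathbb{Q}(w_n)/\mathbb{Q}$, with $\gcd(k,n) = 1$. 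This reframing drives both directions.

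For the sufficiency direction, Lemma~\ref{atomsboolean} writes any $S \in \mathbb{B}(\Gamma)$ as a disjoint union of atoms $[x_1] \sqcup \cdots \sqcup [x_r]$, so it suffices to show that each atom contributes an integer character sum. Restricting $\psi_\alpha$ to the cyclic group $\langle x \rangle$ of order $m$ gives a character of a cyclic group, and $\sum_{s \in [x]} \psi_\alpha(s)$ reduces to a classical Ramanujan sum $\sum_{\gcd(j,m)=1} w_m^{bj}$ for some integer $b$ depending on $\alpha$. Ramanujan sums are well known to be integers (they admit a closed form via the M\"obius function), so this direction follows immediately.

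For the necessity direction, compute $\sigma_k(\lambda_\alpha) = \sum_{s \in S} \psi_\alpha(s)^k = \sum_{s \in S} \psi_\alpha(ks) = \sum_{t \in kS} \psi_\alpha(t)$, where $kS := \{ks : s \in S\}$ and we use that multiplication by $k$ is a bijection of $\Gamma$ whenever $\gcd(k,n) = 1$. Integrality forces $\sigma_k(\lambda_\alpha) = \lambda_\alpha$ for every $\alpha$, and the Fourier-type inversion guaranteed by the orthogonal basis of Lemma~\ref{OrthogonalBasis} converts this system of equalities into the set identity $kS = S$ as subsets of $\Gamma$.

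The main obstacle is closing the gap between the two notions of closure. Galois invariance only yields $kS = S$ for $k$ coprime to $exp(\Gamma)$, whereas membership in a single atom $[x]$ requires $x^k \in S$ for every $k$ coprime to $ord(x)$, a strictly weaker coprimality condition. I would resolve this with a Chinese Remainder argument: given $s \in S$ and any $k$ with $\gcd(k, ord(s)) = 1$, put $m = ord(s)$ and choose $k' \equiv k \pmod{m}$ with $k' \equiv 1 \pmod{p}$ for every prime $p$ dividing $n$ but not $m$; then $\gcd(k', n) = 1$ and $k's = ks$, so the identity $k'S = S$ forces $ks \in S$. Therefore $S$ is closed under the atom equivalence $\sim$, and hence $S \in \mathbb{B}(\Gamma)$.
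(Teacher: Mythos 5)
Your argument is correct, and it is worth noting that the paper itself offers no proof of Theorem~\ref{Cayint} --- it is quoted from \cite{alperin2012integral} and \cite{bridges1982rational} --- so there is no internal proof to match against; what you have written is a valid self-contained derivation along the standard lines. Both halves check out. For sufficiency, the atom sum $\sum_{j\in G_m(1)}\psi_\alpha(x)^j$ with $\psi_\alpha(x)=w_m^b$ is indeed the Ramanujan sum $c_m(b)=\mu\!\left(\tfrac{m}{\gcd(b,m)}\right)\varphi(m)/\varphi\!\left(\tfrac{m}{\gcd(b,m)}\right)\in\mathbb{Z}$. For necessity, the passage from $\sum_{t\in kS}\psi_\alpha(t)=\sum_{t\in S}\psi_\alpha(t)$ for all $\alpha$ to $kS=S$ is justified by the nonsingularity of the character matrix (Lemma~\ref{OrthogonalBasis}), and your Chinese Remainder step correctly bridges the mismatch between $\gcd(k,exp(\Gamma))=1$ and $\gcd(k,ord(s))=1$: since every prime divisor of $exp(\Gamma)$ either divides $m=ord(s)$ (where $k'\equiv k$ forces coprimality) or does not (where $k'\equiv 1$ does), the lifted $k'$ is a genuine Galois index with $k's=ks$. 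This last point is the only place a blind attempt typically goes wrong, and you handled it explicitly. Two cosmetic remarks: you reuse $n$ for $exp(\Gamma)$ while the paper reserves $n$ for $|\Gamma|$ and writes $l$ for the exponent, so the notation should be reconciled; and it is pleasant to observe that your Galois-automorphism necessity argument is exactly the template the paper itself deploys for the oriented analogue in Lemma~\ref{Ori4Nec} (there over $\mathbb{Q}(i)$ using the irreducibility of $\Phi_m^1$), so your proof integrates naturally with the methods of Section 4.
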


\section{A Sufficient condition for integrality of mixed Cayley graph over abelian group}

Unless otherwise stated, we consider $\Gamma$ to be an abelian group of order $n$. Due to Theorem \ref{smallchara}, to find characterization of the integral mixed Cayley graph $Cay(\Gamma, S)$, it is enough to find characterization of the integral  Cayley graph $Cay(\Gamma, S\setminus \overline{S})$ and the integral oriented Cayley graph $Cay(\Gamma,  \overline{S})$. The integral Cayley graph $Cay(\Gamma, S\setminus \overline{S})$ is characterized by Theorem \ref{Cayint}. So our attempt is to characterize the integral oriented Cayley graph $Cay(\Gamma,  \overline{S})$. 

For all $x\in \Gamma(4)$ and $r\in \{0,1,2,3\}$, define $$M_r(x):=\{x^k: 1\leq k \leq ord(x) , k \equiv r \Mod 4  \}.$$
 For all $a\in \Gamma$ and $S\subseteq \Gamma$, define $a+S:= \{ a+s: s\in S\}$ and $-S:=\{ -s: s\in S \}$. Note that $-s$ denotes the inverse of $s$, that is $-s=s^{m-1}$, where $m=ord(s)$.

\begin{lema} Let $\Gamma$ be an abelian group and $x\in \Gamma(4)$. Then the following statement\textbf{}s are true.
\begin{enumerate}[label=(\roman*)]
\item $\bigcup\limits_{r=0}^{3}M_r(x)= \langle x \rangle$.
\item Both $M_1(x)$ and $M_3(x)$ are skew symmetric subsets of $\Gamma$.
\item $-M_1(x)=M_3(x)$ and $-M_3(x)=M_1(x)$.
\item $a+M_1(x)=M_3(x)$ and $a+M_3(x)=M_1(x)$ for all $a\in M_2(x)$.
\item $a+M_1(x)=M_1(x)$ and $a+M_3(x)=M_3(x)$ for all $a\in M_0(x)$.
\end{enumerate} 
\end{lema}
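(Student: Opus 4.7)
The plan is to reduce every part of the lemma to elementary arithmetic of exponents modulo $4$, using the hypothesis that $x \in \Gamma(4)$, i.e.\ $4 \mid \mathrm{ord}(x)$. Writing $\mathrm{ord}(x) = 4m$, the crucial observation is that reducing an integer exponent modulo $\mathrm{ord}(x)$ preserves its residue modulo $4$. Consequently, for any computation of the form $x^j + x^k = x^{j+k}$ or $-x^k = x^{\mathrm{ord}(x)-k}$ inside $\langle x \rangle$, the residue class of the resulting exponent modulo $4$ is determined by the residue classes of the input exponents.

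For part (i), since $\langle x \rangle = \{x^k : 1\le k \le \mathrm{ord}(x)\}$ and the index set $\{1,\dots,\mathrm{ord}(x)\}$ partitions into its four residue classes modulo $4$, the union $\bigcup_{r=0}^{3} M_r(x)$ visibly recovers all of $\langle x \rangle$. For parts (ii) and (iii), I would observe that for any $x^k \in M_1(x) \cup M_3(x)$, the inverse is $-x^k = x^{\mathrm{ord}(x)-k}$, and $\mathrm{ord}(x)-k \equiv -k \pmod 4$ because $\mathrm{ord}(x)\equiv 0\pmod 4$. Thus $k \equiv 1 \pmod 4$ forces $-k \equiv 3 \pmod 4$ and vice versa, which gives $-M_1(x) \subseteq M_3(x)$ and $-M_3(x) \subseteq M_1(x)$; applying negation once more yields the equalities in (iii), and the disjointness of $M_1(x)$ and $M_3(x)$ (from (i)) combined with (iii) immediately gives the skew-symmetry in (ii).

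For parts (iv) and (v), take $a = x^j \in M_r(x)$ with $r \in \{0,2\}$ and $x^k \in M_s(x)$ with $s \in \{1,3\}$. Then $a + x^k = x^{j+k}$ with $j+k \equiv r+s \pmod 4$, and checking the four cases $(r,s) \in \{0,2\}\times\{1,3\}$ gives exactly the four equalities claimed in (iv) and (v). The reverse inclusions come from the observation that $-a = x^{\mathrm{ord}(x)-j}$ again has exponent in residue $0$ or $2$ modulo $4$, so subtracting $a$ is an operation of the same type; alternatively one can note that translation by $a$ is a bijection on $\langle x \rangle$ and matches the cardinalities of the $M_r(x)$'s.

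I do not expect any real obstacle: the lemma is pure bookkeeping once one recognises that $4 \mid \mathrm{ord}(x)$ is precisely the hypothesis that makes the partition of $\langle x \rangle$ into residue classes modulo $4$ compatible with reduction modulo $\mathrm{ord}(x)$. The only care needed is to track the reduction step $\mathrm{ord}(x) - k \equiv -k \pmod 4$, which uses the hypothesis exactly once and then drives all five items.
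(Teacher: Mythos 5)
Your proposal is correct and follows essentially the same route as the paper's proof: both reduce every claim to arithmetic of exponents modulo $4$, using $4\mid \mathrm{ord}(x)$ to see that $\mathrm{ord}(x)-k\equiv -k \pmod 4$ and $j+k\equiv r+s\pmod 4$, and both close the inclusions in (iv)--(v) by a cardinality (or bijectivity of translation) argument. No gaps.
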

\begin{proof}
\begin{enumerate}[label=(\roman*)]
	\item It follows from the definitions of $M_r(x)$ and $\langle x \rangle$.
	\item If $x^k\in M_1(x)$ then $-x^k=x^{n-k} \not\in M_1(x)$, as $k\equiv 1 \pmod 4$ gives $n-k\equiv 3 \pmod 4$. Thus  $M_1(x)$ is a skew symmetric subset of $\Gamma$. Similarly,  $M_3(x)$ is also a skew symmetric subset of $\Gamma$.
	\item As $k\equiv 1 \pmod 4$ if and only if $n-k\equiv 3 \pmod 4$, we get $-x^k=x^{n-k}$. Therefore $-M_1(x)=M_3(x)$ and $-M_3(x)=M_1(x)$.
	\item Let $a\in M_2(x)$ and  $y\in a+M_1(x)$. Then $a=x^{k_1}$ and $y=x^{k_1}+x^{k_2}=x^{k_1+k_2}$, where $k_1\equiv 2 \pmod 4$ and $k_2 \equiv 1 \pmod 4$. Since $k_1+k_2\equiv3 \pmod 4$, we have $y\in M_3(x)$ implying that $a+M_1(x)\subseteq M_3(x)$. Since size of both sets $M_1(x)$ and $M_3(x)$ are same, hence $a+M_1(x)=M_3(x)$. Similarly, $a+M_3(x)=M_1(x)$ for all  $a\in M_2(x)$.
	\item The proof is similar to Part (iv).
\end{enumerate} 	
\end{proof}

\begin{lema}\label{imcgoa11}
Let $x\in \Gamma(4)$. Then $i\bigg(\sum\limits_{s\in M_1(x)} \psi_{\alpha}(s)-\sum\limits_{s\in M_3(x)}\psi_{\alpha}(s)\bigg) \in \mathbb{Z}$ for all $\alpha \in \Gamma$.
\end{lema}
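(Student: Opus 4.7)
The plan is to reduce the sum to a simple geometric-series computation in a single root of unity. Set $m = \mathrm{ord}(x)$, which is divisible by $4$ since $x \in \Gamma(4)$, and set $\omega = \psi_\alpha(x)$. By Lemma~\ref{Basic}(ii), $\omega^m = 1$, so $\omega$ is an $m$-th root of unity. Writing $k = 4\ell + 1$ for $\ell = 0,1,\ldots,\frac{m}{4}-1$ (and similarly $k = 4\ell + 3$), I would express
\[
\sum_{s\in M_1(x)}\psi_\alpha(s) \;=\; \omega \sum_{\ell=0}^{m/4-1}(\omega^4)^\ell, \qquad
\sum_{s\in M_3(x)}\psi_\alpha(s) \;=\; \omega^3 \sum_{\ell=0}^{m/4-1}(\omega^4)^\ell.
\]
So the whole quantity under study equals $i(\omega - \omega^3)\cdot T$, where $T = \sum_{\ell=0}^{m/4-1}(\omega^4)^\ell$.

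Next I would split into two cases on whether $\omega^4 = 1$. If $\omega^4 \neq 1$, then by the finite geometric series formula $T = \frac{\omega^m - 1}{\omega^4 - 1} = 0$, so the whole expression vanishes and is trivially an integer. If $\omega^4 = 1$, then $T = m/4$ and $\omega \in \{1,-1,i,-i\}$; a direct four-case check shows $i(\omega - \omega^3) \in \{0, -2, 2\}$, so the total is one of $0$, $-m/2$, $m/2$, each of which is an integer because $4 \mid m$.

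The argument is essentially a geometric-series computation plus a finite case check, so there is no real obstacle; the only thing to be careful about is to justify that $\omega^m = 1$ (from Lemma~\ref{Basic}(ii)) and to observe that $4 \mid m$ guarantees the $m/2$ appearing in the nontrivial subcases is an integer. Optionally I would note that the pairing $-M_1(x) = M_3(x)$ from the preceding lemma lets one rewrite the expression as $-2\sum_{s\in M_1(x)}\mathrm{Im}\,\psi_\alpha(s)$, making the reality of the expression manifest from the outset; the integrality then still reduces to the same case analysis.
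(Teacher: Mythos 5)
Your proof is correct, but it takes a genuinely different route from the paper. The paper's argument is structural: it invokes the translation identities from the preceding lemma ($a+M_1(x)=M_3(x)$ for $a\in M_2(x)$, and $a+M_1(x)=M_1(x)$ for $a\in M_0(x)$) to show that either some character value forces $(1+\psi_\alpha(a))\mu_\alpha=0$, hence $\mu_\alpha=0$, or else $\psi_\alpha$ is constant equal to $\psi_\alpha(x)=\pm i$ on $M_1(x)$, giving $\mu_\alpha=\pm 2|M_1(x)|$. You instead exploit the fact that everything lives in the cyclic group $\langle x\rangle$, so the two sums collapse to $\omega T$ and $\omega^3 T$ with $\omega=\psi_\alpha(x)$ and $T=\sum_{\ell=0}^{m/4-1}(\omega^4)^\ell$; the geometric series kills $T$ unless $\omega^4=1$, and the residual four-case check recovers exactly the paper's dichotomy ($0$ versus $\pm m/2=\pm 2|M_1(x)|$). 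Your version is shorter and more self-contained: it needs only that $\psi_\alpha$ is a homomorphism with $\psi_\alpha(x)^m=1$ (Lemma~\ref{Basic}(ii)) and that the $x^k$, $1\le k\le m$, are distinct, and it bypasses the set-translation lemma entirely. The paper's version, by working with the sets $M_r(x)$ rather than explicit exponents, is the form that feeds more directly into the induction in Lemma~\ref{integerEigenvalue}, but as a proof of this particular statement both are complete and the computation you give is arguably the cleaner one.
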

\begin{proof} Let $x\in \Gamma(4)$, $\alpha \in \Gamma$ and  $$\mu_{\alpha}=i\bigg(\sum\limits_{s\in M_1(x)} \psi_{\alpha}(s)-\sum\limits_{s\in M_3(x)}\psi_{\alpha}(s)\bigg).$$\\
Case 1: There exists $a\in M_2(x)$ such that $\psi_{\alpha}(a)\neq -1$. Then 
\begin{equation*}
\begin{split}
\mu_{\alpha}&=-i \bigg(\sum_{s\in M_3(x)} \psi_{\alpha}(s)-\sum_{s\in M_1(x)} \psi_{\alpha}(s)\bigg)\\
&=-i \bigg(\sum_{s\in a+M_1(x)} \psi_{\alpha}(s)-\sum_{s\in a+M_3(x)} \psi_{\alpha}(s)\bigg)\\
&=-i \bigg(\sum_{s\in M_1(x)} \psi_{\alpha}(a+s)-\sum_{s\in M_3(x)} \psi_{\alpha}(a+s)\bigg)\\
&=-i \psi_{\alpha}(a) \bigg(\sum_{s\in M_1(x)} \psi_{\alpha}(s)-\sum_{s\in M_3(x)} \psi_{\alpha}(s)\bigg)\\
&=- \psi_{\alpha}(a) \mu_{\alpha},
\end{split}
\end{equation*} We have $(1+\psi_{\alpha}(a)) \mu_{\alpha}=0$. Since $\psi_{\alpha}(a)\neq -1$, so $\mu_{\alpha}=0\in \mathbb{Z}$.\\
Case 2: There exists $a\in M_0(x)$ such that $\psi_{\alpha}(a)\neq 1$. Applying the same process as Case 1, we get $\mu_{\alpha}=0\in \mathbb{Z}$. \\
Case 3: Assume that $\psi_{\alpha}(a)=-1$ for all $a\in M_2(x)$ and $\psi_{\alpha}(a)=1$ for all $a\in M_0(x)$. Then \linebreak[4]$\psi_{\alpha}(a)=-\psi_{\alpha}(x)$ for all $a\in M_3(x)$ and $\psi_{\alpha}(a)=\psi_{\alpha}(x)$ for all $a\in M_1(x)$. Therefore,
\begin{equation*}
\begin{split}
\mu_{\alpha}&=i \bigg(\sum_{s\in M_1(x)} \psi_{\alpha}(s)-\sum_{s\in M_3(x)} \psi_{\alpha}(s)\bigg)\\
&= 2i\psi_{\alpha}(x) |M_1(x)|.
\end{split}
\end{equation*}
Since $\psi_{\alpha}(x)^4=1$ and $\mu_{\alpha}$ is a real number, we have $\psi_{\alpha}(x)=\pm i$. Thus $\mu_{\alpha} = \pm 2 |M_1(x)| \in \mathbb{Z}$.
\end{proof}

For $m\equiv 0 \pmod 4$ and $r\in \{1,3 \}$, define $$G_m^r(1)=\{ k: k\equiv r\Mod 4, \gcd(k,m )= 1 \}.$$

Define $\Gamma(4)$ to be the set of all $x\in \Gamma$ which satisfies $ord(x)\equiv 0 \pmod 4$. It is clear that  $exp(\Gamma) \equiv 0\pmod 4$ if and only if $\Gamma(4)\neq \emptyset$. Define an equivalence relation $\approx$ on $\Gamma(4)$ such that $x \approx y$ if and only if $y=x^k$ for some  $k\in G_m^1(1)$, where $m=ord(x)$. Observe that if  $x,y\in \Gamma(4)$ and $x \approx y$ then $x \sim y$, but the converse need not be true. For example, consider $x=5\pmod {12}$, $y=11\pmod {12}$ in $\mathbb{Z}_{12}$. Here $x,y\in \mathbb{Z}_{12}(4)$ and $x \sim y$ but $x \not\approx y$. For $x\in \Gamma(4)$, let $\llbracket x \rrbracket$ denote the equivalence class of $x$ with respect to the relation $\approx$.

\begin{lema}\label{lemanecc} Let $\Gamma$ be an abelian group, $x\in \Gamma(4)$ and $m=ord(x)$. Then the following are true.
\begin{enumerate}[label=(\roman*)]
\item $\llbracket x \rrbracket= \{ x^k:  k \in G_m^1(1)  \}$.
\item $\llbracket -x \rrbracket= \{ x^k: k \in G_m^3(1)  \}$.
\item $\llbracket x \rrbracket \cap \llbracket -x \rrbracket=\emptyset$.
\item $[x]=\llbracket x \rrbracket \cup \llbracket -x \rrbracket$.
\end{enumerate} 
\end{lema}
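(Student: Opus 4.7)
The plan is to reduce every part to an elementary statement about the index sets $G_m^1(1)$ and $G_m^3(1)$ inside $(\mathbb{Z}/m\mathbb{Z})^*$, exploiting the hypothesis $4 \mid m$ at each step.

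First I would verify that $\approx$ is actually an equivalence relation on $\Gamma(4)$. Reflexivity uses $1 \in G_m^1(1)$; for symmetry and transitivity I would show that $G_m^1(1)$ is a subgroup of $(\mathbb{Z}/m\mathbb{Z})^*$. Closure under multiplication is immediate from $1\cdot 1 \equiv 1 \pmod 4$, and closure under inverses uses $4 \mid m$: if $kk' \equiv 1 \pmod m$ then $kk' \equiv 1 \pmod 4$, forcing $k' \equiv 1 \pmod 4$ whenever $k \equiv 1 \pmod 4$. Once this is in hand, (i) is just the definition of the equivalence class $\llbracket x \rrbracket$ applied to $x$ (since $ord(x) = m$).

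For (ii), I would write $-x = x^{m-1}$ and note that $ord(-x) = m$ because $\gcd(m-1,m)=1$, so $-x \in \Gamma(4)$ and (i) applies to it, giving $\llbracket -x \rrbracket = \{(-x)^k : k \in G_m^1(1)\} = \{x^{(m-k)\bmod m} : k \in G_m^1(1)\}$. The map $k \mapsto m-k$ is a bijection $G_m^1(1) \to G_m^3(1)$, since $\gcd(m-k,m) = \gcd(k,m)$ and, crucially because $4 \mid m$, $k \equiv 1 \pmod 4$ if and only if $m-k \equiv 3 \pmod 4$. Substituting this reindexing yields the claimed form.

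For (iii) and (iv), the point is that $j \mapsto x^j$ is a bijection from $\{0,1,\dots,m-1\}$ onto $\langle x \rangle$, so disjointness and unions of index subsets transfer directly to subsets of $\langle x \rangle$. Then (iii) is immediate from $G_m^1(1) \cap G_m^3(1) = \emptyset$, and for (iv) Lemma~\ref{atomsboolean} gives $[x] = \{x^k : k \in G_m(1)\}$; since $4 \mid m$, any $k$ with $\gcd(k,m)=1$ must be odd, hence lies in exactly one of the two residue classes $1, 3 \pmod 4$, producing the disjoint decomposition $G_m(1) = G_m^1(1) \sqcup G_m^3(1)$. There is no substantial obstacle here; the only point worth flagging is that the hypothesis $4 \mid m$ does three distinct jobs—closing $G_m^1(1)$ under inverses, making $k \mapsto m-k$ swap the residues $1$ and $3$ mod $4$, and forcing every $k$ coprime to $m$ into $G_m^1(1) \cup G_m^3(1)$.
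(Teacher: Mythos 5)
Your proposal is correct and follows essentially the same route as the paper: part (i) directly from the definition of $\approx$, part (ii) by applying (i) to $-x=x^{m-1}$ and reindexing via $k\mapsto -k$ (your $k\mapsto m-k$, the same map mod $m$), and parts (iii)--(iv) from the disjointness and union $G_m(1)=G_m^1(1)\cup G_m^3(1)$. Your preliminary verification that $\approx$ is an equivalence relation, via $G_m^1(1)$ being a subgroup of $(\mathbb{Z}/m\mathbb{Z})^*$ when $4\mid m$, is a small extra that the paper leaves implicit.
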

\begin{proof}
\begin{enumerate}[label=(\roman*)]
\item Let $y\in \llbracket x \rrbracket$. Then $x \approx y$, and so $ord(x)=ord(y)=m$ and there exists $k\in G_m^1(x)$ such that $y=x^k$. Thus $\llbracket x \rrbracket\subseteq \{ x^k:  k \in G_m^1(1)  \}$. On the other hand, let $z=x^k$ for some $k\in G_m^1(1)$. Then $ord(x)=ord(z)$ and so $x \approx z$. Thus $ \{ x^k:  k \in G_m^1(1)  \} \subseteq \llbracket x \rrbracket$.
\item Note that $-x=x^{m-1}$ and $m-1\equiv 3 \pmod 4$. By Part $(i)$, 
\begin{equation*}
\begin{split}
\llbracket -x \rrbracket&= \{ (-x)^k:  k \in G_m^1(1)  \}\\
&= \{ x^{(m-1)k}:  k \in G_m^1(1)  \}\\
&= \{ x^{-k}:  k \in G_m^1(1)  \}\\
&= \{ x^{k}:  k \in G_m^3(1)  \}.
\end{split}
\end{equation*}
\item Since $G_m^1(1)\cap G_m^3(1)=\emptyset$, so by Part $(i)$ and Part $(ii)$, $\llbracket x \rrbracket \cap \llbracket -x \rrbracket=\emptyset$ holds.
\item  Since $[x]= \{ x^k:  k \in G_m(1)  \}$ and $G_m(1)$ is a disjoint union of $G_m^1(1)$ and $ G_m^3(1)$, by Part $(i)$ and Part $(ii)$, $[x]=\llbracket x \rrbracket \cup \llbracket -x \rrbracket$ holds.
\end{enumerate} 
\end{proof}

Let $D_g$ be the set of all odd divisors of $g$, and $D_g^1$ (resp. $D_g^3$) be the set of all odd divisors of $g$ which are congruent to $1$ (resp. $3$) modulo $4$. It is clear that $D_g=D_g^1 \cup D_g^3$.

\begin{lema}\label{imcgoa4} Let $\Gamma$ be an abelian group, $x\in \Gamma(4)$, $m=ord(x)$ and $g=\frac{m}{4}$. Then the following are true.
\begin{enumerate}[label=(\roman*)]
\item $M_1(x) \cup M_3(x)=\bigcup\limits_{h\in D_g} [x^h] $.
\item $M_1(x)= \bigcup\limits_{h\in D_g^1} \llbracket x^h \rrbracket \cup \bigcup\limits_{h\in D_g^3} \llbracket -x^h \rrbracket$.
\item $M_3(x)=\bigcup\limits_{h\in D_g^1} \llbracket -x^h \rrbracket \cup \bigcup\limits_{h\in D_g^3} \llbracket x^h \rrbracket $.
\end{enumerate}
\end{lema}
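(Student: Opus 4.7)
The plan is to use the decomposition $k = hk'$ with $h = \gcd(k, m)$ for every $x^k \in M_1(x) \cup M_3(x)$, and analyze things via the residues of $h$ and $k'$ modulo $4$. Since $k$ is odd (as $k \equiv 1$ or $3 \pmod{4}$), so is $h$. Because $h \mid m = 4g$, oddness of $h$ forces $h \mid g$, hence $h \in D_g$. The complementary factor $k' = k/h$ satisfies $\gcd(k', m/h) = 1$, and since $m/h = 4(g/h)$ is divisible by $4$, this gcd condition automatically forces $k'$ to be odd. Conversely, each pair $(h, k')$ with $h \in D_g$ and $k' \in G_{m/h}(1)$ recovers such a $k$. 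This bijection is the engine for all three parts.

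For part (i), combine the above with $\langle x^k \rangle = \langle x^{\gcd(k,m)} \rangle = \langle x^h \rangle$ and the description $[x^h] = \{x^{hk'} : k' \in G_{m/h}(1)\}$ coming from Lemma~\ref{atomsboolean}. The forward containment places each $x^k \in M_1(x) \cup M_3(x)$ into $[x^h]$ where $h = \gcd(k, m) \in D_g$. For the reverse, every $y \in [x^h]$ with $h \in D_g$ can be written as $x^{hk'}$ with $k'$ necessarily odd (by the observation above), so the exponent $hk'$ is odd and hence $\equiv 1$ or $3 \pmod{4}$, placing $y$ in $M_1(x) \cup M_3(x)$.

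Parts (ii) and (iii) are obtained by refining the decomposition according to residues mod $4$. Since both $h$ and $k'$ are odd, one checks that $hk' \equiv 1 \pmod{4}$ iff $h$ and $k'$ share the same residue mod $4$ (both $\equiv 1$ or both $\equiv 3$), while $hk' \equiv 3 \pmod{4}$ iff they disagree. By Lemma~\ref{lemanecc}(i)--(ii), $\llbracket x^h \rrbracket$ consists of those $x^{hk'}$ with $k' \equiv 1 \pmod{4}$ and $\gcd(k', m/h) = 1$, while $\llbracket -x^h \rrbracket$ collects those with $k' \equiv 3 \pmod{4}$. Cross-tabulating $h \in D_g^1 \cup D_g^3$ against $k' \equiv 1$ or $3 \pmod{4}$ yields (ii) and (iii) directly. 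No serious obstacle is anticipated; the only care needed is tracking disjointness of the union, which follows from the uniqueness of $h = \gcd(k, m)$ in the decomposition together with Lemma~\ref{lemanecc}(iii).
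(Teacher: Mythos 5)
Your proposal is correct and follows essentially the same route as the paper: both decompose each odd exponent $k$ as $k=hj$ with $h=\gcd(k,m)$ (equivalently $\gcd(k,g)$, since $k$ is odd) lying in $D_g$ and $j$ a unit modulo $m/h$, observe that $4\mid m/h$ forces $j$ odd, and then sort by the residues of $h$ and $j$ modulo $4$ using Lemma~\ref{lemanecc}(i)--(ii). No substantive difference from the paper's argument.
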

\begin{proof}
\begin{enumerate}[label=(\roman*)]
\item Let $x^k \in M_1(x) \cup M_3(x)$, where $k \equiv 1 \text{ or } 3 \pmod 4$. To show that $x^k \in \bigcup\limits_{h\in D_g} [x^h]$, it is enough to show $x^k \sim x^h$ for some $h \in D_g$. Let $h=\gcd (k,g) \in D_g$. Note that 
$$ord(x^k)=\frac{m}{\gcd(m,k)}=\frac{m}{\gcd(g,k)}=\frac{m}{h}=ord(x^h).$$ 
Also, as $h=\gcd(k,m)$, we have $\langle x^k \rangle = \langle x^h \rangle$, and so $x^k=x^{hj}$ for some $j\in G_q(1)$, where $q=ord(x^h)=\frac{m}{h}$. Thus $x^k\sim x^h$ where $h=\gcd (k,g) \in D_g$. 
Conversely, let $z\in \bigcup\limits_{h\in D_g} [x^h]$. Then there exists $h\in D_g$ such that $z=x^{hj}$ where $j\in G_q(1)$ and $q= \frac{m}{\gcd(m,h)}$. Now $h\in D_g$ and $q\equiv 0\pmod 4$ imply that both $h$ and $j$ are odd integers. Thus $hj\equiv 1 \text{ or } 3 \pmod 4$ and so $\bigcup\limits_{h\in D_g} [x^h] \subseteq M_1(x) \cup M_3(x)$. Hence $M_1(x) \cup M_3(x)=\bigcup\limits_{h\in D_g} [x^h] $.
	
\item Let $x^k \in M_1(x)$, where $k\equiv 1 \pmod 4$. By Part $(i)$, there exists $h\in D_g$  and $j\in G_q(1)$ such that $x^k=x^{hj}$, where $q=\frac{m}{\gcd(m,h)}$. Note that $k=jh$. If $h\equiv 1 \pmod 4$ then $j\in G_q^1(1)$, otherwise $j\in G_q^3(1)$. Thus using parts $(i)$ and $(ii)$ of Lemma \ref{lemanecc}, if $h\equiv 1 \pmod 4$ then $x^k \approx x^h$, otherwise $x^k \approx -x^h$. Hence $M_1(x) \subseteq \bigcup\limits_{h\in D_g^1} \llbracket x^h \rrbracket \cup \bigcup\limits_{h\in D_g^3} \llbracket -x^h \rrbracket$. Conversely, assume that $z\in  \bigcup\limits_{h\in D_g^1} \llbracket x^h \rrbracket \cup \bigcup\limits_{h\in D_g^3} \llbracket -x^h \rrbracket$. This gives $z\in \llbracket x^h \rrbracket$ for an $h\in D_g^1$ or $z\in \llbracket -x^h \rrbracket$ for an $h\in D_g^3$.  In the first case, by part $(i)$ of Lemma \ref{lemanecc},  there exists  $j\in G_q^1(1)$ with $q=\frac{m}{\gcd(m,h)}$ such that  $z = x^{hj}$. Similarly, for the second case, by part $(ii)$ of Lemma \ref{lemanecc},  there exists  $j\in G_q^3(1)$ with $q=\frac{m}{\gcd(m,h)}$ such that  $z = x^{hj}$. In both the cases, $hj \equiv 1 \pmod 4$. Thus $z\in M_1(x)$.

\item The proof is similar to Part $(ii)$.
\end{enumerate}
\end{proof}

\begin{lema}\label{integerEigenvalue}
Let $\Gamma$ be an abelian group and $x\in \Gamma(4)$. Then $i\bigg( \sum\limits_{s\in \llbracket x \rrbracket} \psi_{\alpha}(s)-\sum\limits_{s\in \llbracket -x \rrbracket}\psi_{\alpha}(s)\bigg) \in \mathbb{Z}$ for all $\alpha \in \Gamma$.
\end{lema}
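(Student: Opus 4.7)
The plan is to induct on $g=\operatorname{ord}(x)/4$, using Lemma~\ref{imcgoa11} as the integer input and Lemma~\ref{imcgoa4} to unpack $M_1(x)$ and $M_3(x)$ into the $\llbracket\cdot\rrbracket$-classes of interest. For convenience, I abbreviate
$$f(y):=i\Bigg(\sum_{s\in \llbracket y\rrbracket}\psi_\alpha(s)-\sum_{s\in \llbracket -y\rrbracket}\psi_\alpha(s)\Bigg),\qquad y\in\Gamma(4),$$
so that the goal becomes $f(x)\in\mathbb{Z}$.

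First I would substitute parts (ii) and (iii) of Lemma~\ref{imcgoa4} into the expression $i\bigl(\sum_{M_1(x)}\psi_\alpha-\sum_{M_3(x)}\psi_\alpha\bigr)$. Because the unions in Lemma~\ref{imcgoa4} are disjoint (for different odd $h\in D_g$, the classes $\llbracket x^h\rrbracket$ and $\llbracket -x^h\rrbracket$ are distinct by Lemma~\ref{lemanecc}(iii) and sit in pairwise distinct $[x^h]$), this produces the clean identity
$$i\Bigg(\sum_{s\in M_1(x)}\psi_\alpha(s)-\sum_{s\in M_3(x)}\psi_\alpha(s)\Bigg)=\sum_{h\in D_g^1}f(x^h)-\sum_{h\in D_g^3}f(x^h),$$
whose left-hand side lies in $\mathbb{Z}$ by Lemma~\ref{imcgoa11}.

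Now I would induct on $g$. In the base case $g=1$ one has $D_g^1=\{1\}$ and $D_g^3=\emptyset$, so the right-hand side above is exactly $f(x)$, giving $f(x)\in\mathbb{Z}$. For the inductive step, every $h\in D_g$ with $h>1$ satisfies $h\mid g\mid m$, hence $\gcd(m,h)=h$ and $\operatorname{ord}(x^h)=m/h=4(g/h)$ with $g/h<g$. In particular $x^h\in\Gamma(4)$, so the inductive hypothesis yields $f(x^h)\in\mathbb{Z}$ for all such $h$. Isolating the $h=1$ term (which contributes $f(x)$) and moving the remaining integer terms to the other side delivers $f(x)\in\mathbb{Z}$, closing the induction.

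I do not anticipate a genuine obstacle: the only things to check carefully are the order computation $\operatorname{ord}(x^h)=m/h$ for odd $h\mid g$ (so that $x^h$ stays inside $\Gamma(4)$ and the induction is well founded) and the disjointness of the unions in Lemma~\ref{imcgoa4}, which guarantees that no class $\llbracket x^h\rrbracket$ or $\llbracket -x^h\rrbracket$ is double-counted when we pass from the $M_1,M_3$ decomposition to the sum over $f(x^h)$.
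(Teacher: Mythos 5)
Your proof is correct and takes essentially the same route as the paper: induction on $\operatorname{ord}(x)=4g$, with Lemma~\ref{imcgoa11} supplying integrality of the $M_1/M_3$ expression and Lemma~\ref{imcgoa4} converting it into the signed sum $\sum_{h\in D_g^1}f(x^h)-\sum_{h\in D_g^3}f(x^h)$, from which the $h=1$ term is isolated. Your explicit checks of the disjointness of the unions and of $\operatorname{ord}(x^h)=m/h$ are details the paper leaves implicit, but the argument is the same.
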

\begin{proof} Note that there exists $x\in \Gamma(4)$ with $ord(x)=4$. Apply induction on $ord(x)$. If $ord(x)=4$, then $M_1(x)=\llbracket x \rrbracket$ and $M_3(x)=\llbracket -x \rrbracket$. Hence by Lemma ~\ref{imcgoa11}, $i\bigg( \sum\limits_{s\in \llbracket x \rrbracket} \psi_{\alpha}(s)-\sum\limits_{s\in \llbracket -x \rrbracket}\psi_{\alpha}(s)\bigg) \in \mathbb{Z}$ for all $\alpha \in \Gamma$. Assume that the statement  holds for all $x\in \Gamma(4)$ with $ord(x)\in \{ 4,8,...,4(g-1)\}$. We prove it for $ord(x)=4g$. Lemma ~\ref{imcgoa4} implies that
$$M_1(x)= \bigcup\limits_{h\in D_g^1} \llbracket x^h \rrbracket \cup \bigcup\limits_{h\in D_g^3} \llbracket -x^h \rrbracket$$ and $$M_3(x)=\bigcup\limits_{h\in D_g^1} \llbracket -x^h \rrbracket \cup \bigcup\limits_{h\in D_g^3} \llbracket x^h \rrbracket .$$ If $ord(x)=4g=m$ and $h>1$ then $ord(x^h), ord(-x^h)\in \{ 4,8,...,4(g-1)\}$. By induction hypothesis 
$$i\bigg( \sum\limits_{s\in \llbracket x^h \rrbracket} \psi_{\alpha}(s)-\sum\limits_{s\in \llbracket -x^h \rrbracket}\psi_{\alpha}(s)\bigg) \in \mathbb{Z} \textnormal{ for all }\alpha \in \Gamma.$$
Now we have 
\begin{equation*}
\begin{split}
i\bigg(\sum\limits_{s\in M_1(x)} \psi_{\alpha}(s)-\sum\limits_{s\in M_3(x)}\psi_{\alpha}(s)\bigg) &= i\bigg( \sum\limits_{s\in \llbracket x \rrbracket} \psi_{\alpha}(s)-\sum\limits_{s\in \llbracket -x \rrbracket}\psi_{\alpha}(s)\bigg) \\
&+ \sum_{h\in D_{g}^1, h> 1} i\bigg( \sum\limits_{s\in \llbracket x^h \rrbracket} \psi_{\alpha}(s)-\sum\limits_{s\in \llbracket -x^h \rrbracket}\psi_{\alpha}(s)\bigg)\\
&+ \sum_{h\in D_{g}^3, h> 1} i\bigg( \sum\limits_{s\in \llbracket -x^h \rrbracket} \psi_{\alpha}(s)-\sum\limits_{s\in \llbracket x^h \rrbracket}\psi_{\alpha}(s)\bigg).
\end{split} 
\end{equation*}
Hence
\begin{equation*}
\begin{split}
i\bigg( \sum\limits_{s\in \llbracket x \rrbracket} \psi_{\alpha}(s)-\sum\limits_{s\in \llbracket -x \rrbracket}\psi_{\alpha}(s)\bigg) &= i\bigg(\sum\limits_{s\in M_1(x)} \psi_{\alpha}(s)-\sum\limits_{s\in M_3(x)}\psi_{\alpha}(s)\bigg)\\
&- \sum_{h\in D_{g}^1, h> 1} i\bigg( \sum\limits_{s\in \llbracket x^h \rrbracket} \psi_{\alpha}(s)-\sum\limits_{s\in \llbracket -x^h \rrbracket}\psi_{\alpha}(s)\bigg)\\
&+ \sum_{h\in D_{g}^3, h> 1} i\bigg( \sum\limits_{s\in \llbracket x^h \rrbracket} \psi_{\alpha}(s)-\sum\limits_{s\in \llbracket -x^h \rrbracket}\psi_{\alpha}(s)\bigg)
\end{split} 
\end{equation*} is also an integer for all $\alpha \in \Gamma$ because of Lemma ~\ref{imcgoa11} and induction hypothesis.
\end{proof}

 For $exp(\Gamma) \equiv 0 \pmod 4$, define $\mathbb{D}(\Gamma)$ to be the set of all skew symmetric subsets $S$ of $\Gamma$ such that $S=\llbracket x_1 \rrbracket\cup...\cup \llbracket x_k \rrbracket$ for some $x_1,...,x_k\in \Gamma(4)$. For  $exp(\Gamma) \not\equiv 0 \pmod 4$, define $\mathbb{D}(\Gamma)=\{ \emptyset \}$.
 
\begin{theorem}\label{OrientedChara}
Let $\Gamma$ be an abelian group. If $S \in \mathbb{D}(\Gamma)$ then the oriented Cayley graph  $Cay(\Gamma, S)$ is integral.
\end{theorem}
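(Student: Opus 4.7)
The plan is to combine Corollary~\ref{OriEig} with Lemma~\ref{integerEigenvalue}. By Corollary~\ref{OriEig}, every eigenvalue of the oriented Cayley graph $Cay(\Gamma,S)$ has the form
$$\mu_\alpha \;=\; i\sum_{s\in S}\bigl(\psi_\alpha(s)-\psi_\alpha(-s)\bigr),$$
so the task reduces to showing that $\mu_\alpha \in \mathbb{Z}$ for every $\alpha\in\Gamma$. The case $S=\emptyset$ (which is the only possibility when $exp(\Gamma) \not\equiv 0\pmod 4$) is trivial, so I assume $exp(\Gamma)\equiv 0\pmod 4$ and unpack the hypothesis as $S = \llbracket x_1\rrbracket \cup \cdots \cup \llbracket x_k\rrbracket$ with $x_1,\ldots,x_k\in \Gamma(4)$; since distinct $\approx$-classes are disjoint, I may take this union disjoint.

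The next step is to convert the $\psi_\alpha(-s)$ contribution into a sum indexed by $-S$. The key set-theoretic identity is $-\llbracket x\rrbracket = \llbracket -x\rrbracket$ for every $x\in\Gamma(4)$, which follows from parts (i) and (ii) of Lemma~\ref{lemanecc}: negation on exponents is the bijection $k\mapsto m-k$ from $G_m^1(1)$ onto $G_m^3(1)$. Combining this with the disjointness of $\llbracket x_j\rrbracket$ and $\llbracket -x_j\rrbracket$ (Lemma~\ref{lemanecc}(iii)) and with skew-symmetry of $S$ (so that $S$ and $-S$ do not overlap), the eigenvalue $\mu_\alpha$ regroups as
$$\mu_\alpha \;=\; \sum_{j=1}^{k} i\!\left(\sum_{s\in\llbracket x_j\rrbracket}\psi_\alpha(s) \;-\; \sum_{s\in\llbracket -x_j\rrbracket}\psi_\alpha(s)\right).$$
Finally, each summand on the right is an integer by Lemma~\ref{integerEigenvalue} applied to $x_j\in\Gamma(4)$, so $\mu_\alpha\in\mathbb{Z}$, which is exactly what is needed.

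The substance of this theorem has already been absorbed by Lemma~\ref{integerEigenvalue}; at the present level the only delicate points are the identity $-\llbracket x\rrbracket = \llbracket -x\rrbracket$ and the clean pairing of the $\llbracket x_j\rrbracket$-sum with the $\llbracket -x_j\rrbracket$-sum via skew-symmetry. Both of these are direct bookkeeping consequences of Lemma~\ref{lemanecc}, so I do not anticipate any real obstacle here and expect the proof to be quite short.
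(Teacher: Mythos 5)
Your proposal is correct and follows essentially the same route as the paper: decompose $S$ into the disjoint classes $\llbracket x_j\rrbracket$, rewrite each $\sum_{s\in\llbracket x_j\rrbracket}\psi_\alpha(-s)$ as a sum over $\llbracket -x_j\rrbracket$ via $-\llbracket x_j\rrbracket=\llbracket -x_j\rrbracket$, and invoke Lemma~\ref{integerEigenvalue} termwise. The only difference is that you make the bookkeeping (disjointness, the negation identity, the trivial case $\exp(\Gamma)\not\equiv 0\pmod 4$) explicit where the paper leaves it implicit.
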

\begin{proof}
Assume that $S \in \mathbb{D}(\Gamma)$. Then $S=\llbracket x_1 \rrbracket\cup...\cup \llbracket x_k \rrbracket$ for some $x_1,...,x_k\in \Gamma(4)$. Let $Sp_H(Cay(\Gamma, S))=\{ \mu_\alpha : \alpha \in \Gamma \}$. We have 	
\begin{equation*}
	\begin{split}
\mu_{\alpha} &=i\sum_{s\in S}\bigg( \psi_{\alpha}(s)-\psi_{\alpha}(-s)\bigg) \\
&= \sum_{j=1}^k \sum_{s\in \llbracket x_j \rrbracket} i\bigg( \psi_{\alpha}(s)-\psi_{\alpha}(-s)\bigg).
	\end{split} 
\end{equation*}
Now by Lemma~\ref{integerEigenvalue}, $\mu_{\alpha} \in \mathbb{Z}$ for all $\alpha \in \Gamma$. Hence the oriented Cayley graph  $Cay(\Gamma, S)$ is integral.
\end{proof}

\begin{theorem}
Let $\Gamma$ be an abelian group. If $S \setminus \overline{S} \in \mathbb{B}(\Gamma)$ and $\overline{S} \in \mathbb{D}(\Gamma)$ then the mixed Cayley graph  $Cay(\Gamma, S)$ is integral.
\end{theorem}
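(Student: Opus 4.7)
The plan is to assemble the result directly from the three prior pillars: Theorem \ref{smallchara} (reduction of integrality of a mixed Cayley graph to the integrality of its undirected and oriented parts), Theorem \ref{Cayint} (characterization of integral simple Cayley graphs via $\mathbb{B}(\Gamma)$), and Theorem \ref{OrientedChara} (sufficient condition for integrality of an oriented Cayley graph via $\mathbb{D}(\Gamma)$). There is essentially no new content to prove; the statement is packaged precisely so that the two hypotheses $S\setminus\overline{S}\in\mathbb{B}(\Gamma)$ and $\overline{S}\in\mathbb{D}(\Gamma)$ feed directly into the two halves of the decomposition.

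First I would invoke Theorem \ref{smallchara}, which states that $Cay(\Gamma,S)$ is integral if and only if both the simple Cayley graph $Cay(\Gamma,S\setminus\overline{S})$ and the oriented Cayley graph $Cay(\Gamma,\overline{S})$ are integral. It therefore suffices to establish integrality of each of these two graphs separately.

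Next, from the hypothesis $S\setminus\overline{S}\in\mathbb{B}(\Gamma)$ together with Theorem \ref{Cayint}, the simple Cayley graph $Cay(\Gamma,S\setminus\overline{S})$ is integral. From the hypothesis $\overline{S}\in\mathbb{D}(\Gamma)$ together with Theorem \ref{OrientedChara}, the oriented Cayley graph $Cay(\Gamma,\overline{S})$ is integral. (Note that $\overline{S}$ is skew-symmetric by construction, which is consistent with membership in $\mathbb{D}(\Gamma)$; this is the only sanity check one needs to perform.)

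Combining these two conclusions with the ``if'' direction of Theorem \ref{smallchara} yields the integrality of $Cay(\Gamma,S)$, completing the proof. Since every step is a direct application of a previously established result, there is no genuine obstacle; the only care required is to verify that the hypotheses are stated in exactly the form demanded by each cited theorem, which they are.
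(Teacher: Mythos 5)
Your proposal is correct and follows exactly the same route as the paper: reduce via Theorem \ref{smallchara} to the two constituent graphs, then apply Theorem \ref{Cayint} to $Cay(\Gamma, S\setminus\overline{S})$ and Theorem \ref{OrientedChara} to $Cay(\Gamma,\overline{S})$. Nothing is missing.
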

\begin{proof} By Theorem~\ref{smallchara}, $Cay(\Gamma, S)$ is integral if and only if both $Cay(\Gamma, S \setminus \overline{S})$ and $Cay(\Gamma, \overline{S})$ are integral. Thus the result follows from Theorem~\ref{Cayint} and Theorem~\ref{OrientedChara}.
\end{proof}


\section{Characterization of integral mixed Cayley graph over abelian group}

	The \textit{cyclotomic polynomial} $\Phi_n(x)$ is the monic polynomial whose zeros are the primitive $n^{th}$ root of unity. That is $$\Phi_n(x)= \prod_{a\in G_n(1)}(x-w_n^a),$$ where $w_n=\exp(\frac{2\pi i}{n})$. Clearly the degree of $\Phi_n(x)$ is $\varphi(n)$. See \cite{numbertheory} for more details about cyclotomic polynomials.

\begin{theorem}~\cite{numbertheory}
	The cyclotomic polynomial $\Phi_n(x)$ is irreducible in $\mathbb{Z}[x]$.
\end{theorem}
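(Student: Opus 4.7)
The plan is to prove irreducibility via the classical Dedekind--Gauss argument based on reduction modulo primes. Suppose $\Phi_n(x) = f(x)g(x)$ with $f$ monic irreducible over $\mathbb{Q}$ and $g$ monic; since $\Phi_n(x) \in \mathbb{Z}[x]$, Gauss's lemma lets me assume $f,g \in \mathbb{Z}[x]$. Fix a root $\zeta$ of $f$, necessarily a primitive $n$-th root of unity. The heart of the proof is the following claim: for every prime $p$ with $p\nmid n$, $\zeta^p$ is also a root of $f$.

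Granting the claim, every primitive $n$-th root of unity has the form $\zeta^k$ with $\gcd(k,n)=1$, and any such $k$ factors into primes not dividing $n$. Iterating the claim, each $\zeta^k$ is a root of $f$, so $\deg f \geq \varphi(n) = \deg \Phi_n$, forcing $f=\Phi_n$.

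To prove the claim I argue by contradiction: if $\zeta^p$ is not a root of $f$, then (being a primitive $n$-th root of unity, since $\gcd(p,n)=1$) it must be a root of $g$. Hence $\zeta$ is a root of $g(x^p)$, and since $f$ is the minimal polynomial of $\zeta$ over $\mathbb{Q}$, we have $f(x)\mid g(x^p)$ in $\mathbb{Q}[x]$, and then in $\mathbb{Z}[x]$ by Gauss's lemma. Reducing modulo $p$ and invoking the Frobenius identity $\overline{g}(x^p) = \overline{g}(x)^p$ in $\mathbb{F}_p[x]$ gives $\overline{f}(x) \mid \overline{g}(x)^p$. Thus $\overline{f}$ and $\overline{g}$ share an irreducible factor in $\mathbb{F}_p[x]$, so $\overline{\Phi_n}(x) = \overline{f}\,\overline{g}$ has a repeated root in $\overline{\mathbb{F}_p}$. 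Since $\Phi_n(x) \mid x^n-1$, this means $x^n - 1$ has a repeated root modulo $p$; but $p\nmid n$ makes $nx^{n-1}$ coprime to $x^n-1$ in $\mathbb{F}_p[x]$, contradicting separability.

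The main obstacle, while not deep, is decisive: the passage from the divisibility $f(x)\mid g(x^p)$ over $\mathbb{Z}$ to a contradiction in $\mathbb{F}_p[x]$ depends crucially on the Frobenius collapse $g(x^p)\equiv g(x)^p \pmod{p}$, which converts an algebraic divisibility question into a repeated-root question. The hypothesis $p\nmid n$ is used twice --- to guarantee $\zeta^p$ is still a primitive $n$-th root of unity (so that it really must lie in the root set of $f$ or $g$) and to ensure $x^n-1$ is separable modulo $p$.
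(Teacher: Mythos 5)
Your argument is the classical Dedekind--Gauss proof (reduce $f(x)\mid g(x^p)$ modulo $p$, use Frobenius, and derive a repeated root of the separable polynomial $x^n-1$ over $\mathbb{F}_p$), and it is correct; note that the paper itself offers no proof here, simply citing Ireland--Rosen, where essentially this same argument appears. The only presentational quibble is that to iterate over a general $k$ with $\gcd(k,n)=1$ you need the claim for an arbitrary root of $f$ rather than the single fixed $\zeta$, but your proof of the claim applies verbatim to any such root, so nothing is missing.
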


The polynomial $\Phi_n(x)$ is irreducible over $\mathbb{Q}(i)$ if and only if $[\mathbb{Q}(i,w_n) : \mathbb{Q}(i)]= \varphi(n)$. Also $ \mathbb{Q}(w_n)$ does not contain the number $i=\sqrt{-1}$ if and only if $n\not\equiv 0 \Mod 4$. Thus, if $n\not\equiv 0 \Mod 4$ then $[\mathbb{Q}(i,w_n):\mathbb{Q}(w_n) ]=2=[\mathbb{Q}(i), \mathbb{Q}]$, and therefore $$[\mathbb{Q}(i,w_n) : \mathbb{Q}(i)]=\frac{[\mathbb{Q}(i,w_n) : \mathbb{Q}(w_n)] . [\mathbb{Q}(w_n) : \mathbb{Q}]}{ [\mathbb{Q}(i) : \mathbb{Q}]}= [\mathbb{Q}(w_n) : \mathbb{Q}]= \varphi(n).$$ Hence for $n\not\equiv 0 \Mod 4$, the polynomial $\Phi_n(x)$ is irreducible over $\mathbb{Q}(i)$. 

Let $n \equiv 0 \Mod 4$. Then $ \mathbb{Q}(i,w_n)= \mathbb{Q}(w_n)$, and so $$[\mathbb{Q}(i,w_n) : \mathbb{Q}(i)] = \frac{[\mathbb{Q}(i,w_n) : \mathbb{Q}]}{[\mathbb{Q}(i) : \mathbb{Q}]}=\frac{\varphi(n)}{2}.$$ Hence the polynomial $\Phi_n(x)$ is reducible over $\mathbb{Q}(i)$.

We know that $G_n(1)$ is a disjoint union of $G_n^1(1)$ and $G_n^3(1)$. Define $$\Phi_n^{1}(x)= \prod_{a\in G_n^1(1)}(x-w_n^a) \textnormal{ and } \Phi_n^3(x)= \prod_{a\in G_n^3(1)}(x-w_n^a).$$ It is clear from the definition that $\Phi_n(x)=\Phi_n^1(x)\Phi_n^3(x)$.

\begin{theorem}~\cite{kadyan2021integral}
	Let $n\equiv 0\Mod 4$. The factors $\Phi_n^1(x)$ and $\Phi_n^3(x)$ of $\Phi_n(x)$ are irreducible monic polynomials in $\mathbb{Q}(i)[x]$ of degree $\frac{\varphi(n)}{2}$.
\end{theorem}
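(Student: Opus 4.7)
The plan is to invoke Galois theory for the tower $\mathbb{Q} \subseteq \mathbb{Q}(i) \subseteq \mathbb{Q}(w_n)$, using the ambient facts already established in the excerpt, namely that $i = w_n^{n/4} \in \mathbb{Q}(w_n)$ when $n \equiv 0 \pmod 4$ and that $[\mathbb{Q}(i,w_n):\mathbb{Q}(i)] = \varphi(n)/2$.

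First, I would dispose of the degree claim. The map $k \mapsto n-k$ sends $G_n^1(1)$ bijectively onto $G_n^3(1)$: if $n \equiv 0 \pmod 4$ and $k \equiv 1 \pmod 4$, then $n-k \equiv 3 \pmod 4$, and $\gcd(k,n) = \gcd(n-k,n)$. Hence $|G_n^1(1)| = |G_n^3(1)| = \varphi(n)/2$, which is the degree of each $\Phi_n^r(x)$.

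Next, I would verify that $\Phi_n^1(x)$ and $\Phi_n^3(x)$ actually have coefficients in $\mathbb{Q}(i)$. Recall $\mathrm{Gal}(\mathbb{Q}(w_n)/\mathbb{Q}) \cong (\mathbb{Z}/n\mathbb{Z})^*$ via $\sigma_a(w_n) = w_n^a$. Since $\sigma_a(i) = \sigma_a(w_n^{n/4}) = i^a$, an automorphism $\sigma_a$ fixes $\mathbb{Q}(i)$ precisely when $a \equiv 1 \pmod 4$. Thus the subgroup $H := \mathrm{Gal}(\mathbb{Q}(w_n)/\mathbb{Q}(i))$ corresponds to $G_n^1(1)$. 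For any $a \in G_n^1(1)$ and any $b \in G_n^r(1)$ (with $r \in \{1,3\}$), the product $ab$ still satisfies $ab \equiv r \pmod 4$ and $\gcd(ab,n)=1$; so $H$ permutes the roots of $\Phi_n^1$ among themselves and likewise for $\Phi_n^3$. The coefficients of both polynomials are therefore fixed by $H$ and lie in $\mathbb{Q}(w_n)^H = \mathbb{Q}(i)$.

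For irreducibility, the minimal polynomial of $w_n$ over $\mathbb{Q}(i)$ has degree $[\mathbb{Q}(i,w_n):\mathbb{Q}(i)] = \varphi(n)/2$. Since $\Phi_n^1(x)$ is a monic polynomial in $\mathbb{Q}(i)[x]$ of degree exactly $\varphi(n)/2$ that vanishes at $w_n$, it must coincide with this minimal polynomial and is hence irreducible. For $\Phi_n^3(x)$, I would pick any $a_0 \in G_n^3(1)$; then $w_n^{a_0}$ is again a primitive $n$-th root of unity, and the same extension-degree argument shows its minimal polynomial over $\mathbb{Q}(i)$ has degree $\varphi(n)/2$. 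Since $\Phi_n^3(x) \in \mathbb{Q}(i)[x]$ is monic of that degree with $w_n^{a_0}$ as a root, it is the minimal polynomial of $w_n^{a_0}$ over $\mathbb{Q}(i)$, and is therefore irreducible.

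The only real content is the identification of $H$ with the subgroup indexed by $G_n^1(1)$ together with the verification that $H$ preserves each of the two root sets; once those are in place, the irreducibility follows immediately by a degree count. I expect this identification to be the crux, since everything else is bookkeeping.
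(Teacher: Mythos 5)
Your argument is correct and complete: the identification of $\mathrm{Gal}(\mathbb{Q}(w_n)/\mathbb{Q}(i))$ with the subgroup indexed by $G_n^1(1)$ (via $\sigma_a(i)=i^a$), the closure of each residue class under multiplication by $G_n^1(1)$, and the degree count against $[\mathbb{Q}(i,w_n):\mathbb{Q}(i)]=\varphi(n)/2$ together establish the theorem. Note that the paper itself gives no proof here --- it cites \cite{kadyan2021integral} --- but your route is exactly the Galois-theoretic one that the surrounding discussion (the computation of $[\mathbb{Q}(i,w_n):\mathbb{Q}(i)]$ for $n\equiv 0\Mod 4$) sets up, so nothing further is needed.
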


In this section, first we prove that there is no integral oriented Cayley graph $Cay(\Gamma, S)$ for $exp(\Gamma) \not\equiv 0\pmod 4$ and $S\neq \emptyset$. After that we find a necessary condition on the set $S$ so that the mixed Cayley graph $Cay(\Gamma, S)$ is integral.

\begin{theorem}\label{ori4}
Let $\Gamma$ be an abelian group and $exp(\Gamma) \not\equiv 0\pmod 4$. Then the oriented Cayley graph $Cay(\Gamma, S)$ is integral if and only if $S=\emptyset$
\end{theorem}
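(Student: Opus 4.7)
The plan is to dispose of the backward direction trivially (if $S=\emptyset$ then $H(Cay(\Gamma,S))$ is the zero matrix and all eigenvalues are $0\in\mathbb{Z}$) and then focus on the forward direction via a Galois/cyclotomic argument. By Corollary~\ref{OriEig}, the eigenvalues are
\[
\mu_\alpha = i\, T_\alpha, \qquad T_\alpha := \sum_{s\in S}\bigl(\psi_\alpha(s)-\psi_\alpha(-s)\bigr).
\]
The first key observation I would record is that $\psi_\alpha(-s)=\psi_\alpha(s)^{-1}=\overline{\psi_\alpha(s)}$ (since $|\psi_\alpha(s)|=1$), so each summand $\psi_\alpha(s)-\psi_\alpha(-s)=2i\,\mathrm{Im}(\psi_\alpha(s))$ is purely imaginary. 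Hence $T_\alpha\in i\mathbb{R}$.

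Set $n=\exp(\Gamma)$. By Lemma~\ref{Basic}(iii), every $\psi_\alpha(s)$ is an $n$-th root of unity, so $T_\alpha\in \mathbb{Q}(w_n)$. Now I would invoke the field-theoretic input already recalled in this section: when $n\not\equiv 0 \pmod 4$, one has $i\notin\mathbb{Q}(w_n)$ (equivalently, $\mathbb{Q}(i)\cap\mathbb{Q}(w_n)=\mathbb{Q}$). Assuming $Cay(\Gamma,S)$ is integral, $\mu_\alpha\in\mathbb{Z}$ for every $\alpha\in\Gamma$, and therefore $T_\alpha=-i\mu_\alpha\in i\mathbb{Z}$. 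If some $T_\alpha$ were nonzero, dividing by the integer $-\mu_\alpha$ would force $i\in\mathbb{Q}(w_n)$, contradicting the hypothesis $n\not\equiv 0\pmod 4$. Thus $T_\alpha=0$, and consequently $\mu_\alpha=0$ for all $\alpha\in\Gamma$.

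To conclude, I would use the fact that $H(Cay(\Gamma,S))$ is Hermitian, hence diagonalizable; having all eigenvalues equal to $0$ forces $H(Cay(\Gamma,S))=0$. This means $Cay(\Gamma,S)$ has no edges, so $S=\emptyset$, completing the forward direction.

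I don't foresee a serious obstacle: the argument is essentially the observation that $T_\alpha$ lies in $\mathbb{Q}(w_n)\cap i\mathbb{R}$, while integrality would place $T_\alpha$ in $i\mathbb{Q}$, and the two conditions are incompatible with $i\notin\mathbb{Q}(w_n)$ unless $T_\alpha=0$. The only step requiring a small bit of care is verifying cleanly that $\mathbb{Q}(i)\cap\mathbb{Q}(w_n)=\mathbb{Q}$ for $n\not\equiv 0\pmod 4$, but this is exactly the content of the degree computation $[\mathbb{Q}(i,w_n):\mathbb{Q}(i)]=\varphi(n)$ already displayed immediately before the theorem statement.
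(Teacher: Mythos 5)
Your proposal is correct, and it reaches the conclusion by a genuinely more direct route than the paper, although both rest on the same field-theoretic input (the degree computation $[\mathbb{Q}(i,w_n):\mathbb{Q}(i)]=\varphi(n)$ for $n\not\equiv 0\pmod 4$, equivalently $i\notin\mathbb{Q}(w_n)$). The paper instead works with the polynomial $p(x)=\sum_j a_jx^j-\mu_\alpha\in\mathbb{Q}(i)[x]$ vanishing at $w$, uses irreducibility of $\Phi_l(x)$ over $\mathbb{Q}(i)$ to conclude that $p$ is a multiple of $\Phi_l$ and hence also vanishes at $w^{-1}$, deduces $\mu_\alpha=\mu_{-\alpha}$, and then combines this with the antisymmetry $\mu_\alpha=-\mu_{-\alpha}$ from Corollary~\ref{OriEig} to force $\mu_\alpha=0$. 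You bypass both the Galois-conjugation step and the antisymmetry relation: you observe that $T_\alpha=\sum_{s\in S}(\psi_\alpha(s)-\psi_\alpha(-s))$ lies in $\mathbb{Q}(w_n)$ and equals $-i\mu_\alpha\in i\mathbb{Z}$ under the integrality hypothesis, so a nonzero value would put $i$ inside $\mathbb{Q}(w_n)$, which is impossible when $4\nmid n$. This is shorter and arguably cleaner; what the paper's formulation buys is uniformity with the later arguments (Lemma~\ref{Ori4Nec} reuses exactly the same ``$p(x)$ is a multiple of an irreducible cyclotomic factor, so evaluate at a conjugate root'' template with $\Phi_m^1$ in place of $\Phi_l$), so the authors pay a little extra here to set up machinery they need anyway. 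One small bonus of your write-up: you justify the final step (all eigenvalues zero $\Rightarrow$ $H=0$ $\Rightarrow$ $S=\emptyset$) via diagonalizability of the Hermitian matrix, which the paper leaves implicit.
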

\begin{proof}  
Let $l=exp(\Gamma)$ and $Sp_H(Cay(\Gamma, S))=\{ \mu_\alpha : \alpha \in \Gamma \}$. Assume that $l\not\equiv 0\pmod 4$ and $Cay(\Gamma, S)$ is integral. By Corollary~\ref{OriEig}, $\mu_{\alpha}=-\mu_{-\alpha}\in \mathbb{Q}$  and $$\mu_{\alpha}=i\sum_{s\in S}\bigg( \psi_{\alpha}(s)-\psi_{\alpha}(-s)\bigg) \textnormal{ for all } \alpha \in \Gamma.$$ 
Note that, $\psi_{\alpha}(s)$ and $\psi_{\alpha}(-s)$ are $l^{th}$ roots of unity for all $ \alpha \in \Gamma, s\in S$.
Fix a primitive $l^{th}$ root $w$ of unity and express $\psi_{\alpha}(s)$ in the form $w^j$ for some $j \in \{ 0,1,...,l-1\}$. Thus $$\mu_{\alpha}=i\sum_{s\in S}\bigg( \psi_{\alpha}(s)-\psi_{\alpha}(-s)\bigg) = \sum_{j=0}^{l-1} a_j w^j,$$
 where $a_j \in \mathbb{Q}(i)$. Since $\mu_{\alpha} \in \mathbb{Q}$, so $p(x)= \sum\limits_{j=0}^{l-1} a_j x^j- \mu_{\alpha} \in \mathbb{Q}(i)[x]$ and $w$ is a root of $p(x)$. Since $l\not\equiv 0(\mod 4)$, so $\Phi_l(x)$ is irreducible in $\mathbb{Q}(i)[x]$. Therefore, $p(x)$ is a multiple of the irreducible polynomial $\Phi_l(x)$, and so $w^{-1}=w^{n-1}$ is also a root of $p(x)$. Note that, if $\psi_{\alpha}(s)=w^j$ for some $j \in \{ 0,1,...,l-1\}$ then $\psi_{-\alpha}(s)=w^{-j}$. We have 
 $$0=p(w^{-1})=\sum_{j=0}^{l-1} a_j w^{-j}- \mu_{\alpha}=\mu_{-\alpha}-\mu_{\alpha} \Rightarrow \mu_\alpha = \mu_{-\alpha}.$$ 
Since $\mu_{-\alpha}=-\mu_{\alpha}$, we get $\mu_{\alpha}=0$, for all $\alpha \in \Gamma$. Hence $S=\emptyset$. 

Conversely, if $S=\emptyset$ then all the eigenvalues of $Cay(\Gamma, S)$ are zero. Thus $Cay(\Gamma, S)$ is integral.
\end{proof}

Lemma \ref{lemanecc} says  that corresponding to each equivalence class of the relation $\sim$ we get two equivalence classes of the relation $\approx$.
 Define $E$ to be the matrix of size $n\times n$,  whose rows and columns are indexed by elements of $\Gamma$ such that $E_{x,y}=i\psi_{x}(y)$. Note that each row of $E$ corresponds to a character of $\Gamma$ and $EE^*=nI_n$. Let $v_{\llbracket x \rrbracket}$ be the vector in $\mathbb{Q}^n$ whose coordinates are indexed by elements of $\Gamma$ for which the $z^{th}$ coorninate is $1$ and $(n-z)^{th}$ coorninate is $-1$ for all $z \in \llbracket x \rrbracket$. By Lemma~\ref{integerEigenvalue}, $Ev_{\llbracket x \rrbracket} \in \mathbb{Q}^n$. 

\begin{lema}\label{Ori4Nec} Let $\Gamma$ be an abelian group, $v\in \mathbb{Q}^n$ and $Ev \in \mathbb{Q}^n$. Let the coordinates of $v$ be indexed by elements of $\Gamma$. Then
\begin{enumerate}[label=(\roman*)]
\item $v_x=-v_{-x}$ for all $x \in \Gamma$.
\item $v_x=v_y$ for all $x,y \in \Gamma(4)$ satisfying $x \approx y$.
\item $v_x=0$ for all $x\in \Gamma \setminus \Gamma(4)$.
\end{enumerate} 
\end{lema}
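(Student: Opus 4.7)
The plan is to exploit the rationality of each entry of $Ev$ via two different algebraic ingredients: complex conjugation for~(i), and the Galois action of $\mathrm{Gal}(\mathbb{Q}(w_l)/\mathbb{Q})$ for~(ii) and~(iii), where $l = \exp(\Gamma)$. At each step, after producing an identity of the form $\sum_{y\in\Gamma} \psi_x(y)\, c_y = 0$ valid for every $x \in \Gamma$, I will invoke Lemma~\ref{OrthogonalBasis} to conclude $c_y = 0$ pointwise.

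For~(i), starting from $(Ev)_x = i\sum_y \psi_x(y)v_y \in \mathbb{Q}$ and equating this quantity with its own complex conjugate, the identities $\overline{\psi_x(y)} = \psi_x(-y)$ together with the change of variable $y \mapsto -y$ in the conjugated sum give $\sum_y \psi_x(y)(v_y + v_{-y}) = 0$ for every $x$; invertibility then yields $v_{-y} = -v_y$.

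For parts~(ii) and~(iii), I first dispose of the case $l \not\equiv 0 \pmod 4$ by observing that $i \notin \mathbb{Q}(w_l)$, so the hypothesis forces $u_x := \sum_y \psi_x(y)\, v_y \in \mathbb{Q}(w_l) \cap i\mathbb{Q} = \{0\}$; invertibility then gives $v = 0$, and the conclusions hold trivially (note $\Gamma(4) = \emptyset$ in this case). Now assume $l \equiv 0 \pmod 4$. For each integer $k$ with $\gcd(k,l)=1$, the automorphism $\sigma_k$ of $\mathbb{Q}(w_l)$ sending $w_l \mapsto w_l^k$ satisfies $\sigma_k(i) = i^k$ (since $i = w_l^{l/4}$) and $\sigma_k(\psi_x(y)) = \psi_x(ky)$ by bilinearity of characters. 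Applying $\sigma_k$ to the rational quantity $(Ev)_x$, which it fixes, and reindexing the sum by $y \mapsto k^{-1}y$, invertibility delivers the master relation
\begin{equation*}
v_{ky} \;=\; \varepsilon_k \, v_y \quad \text{for every } y \in \Gamma,
\end{equation*}
where $\varepsilon_k = +1$ if $k \equiv 1 \pmod 4$ and $\varepsilon_k = -1$ if $k \equiv 3 \pmod 4$. From this master relation: (i) is recovered by taking $k = l-1$; (ii), for $y = x^{k_0}$ with $k_0 \in G_m^1(1)$ and $m = \mathrm{ord}(x)$, follows by lifting $k_0$ via CRT to an integer $k \equiv k_0 \pmod m$ with $\gcd(k,l) = 1$ (here $k \equiv 1 \pmod 4$ is automatic because $4 \mid m$), which yields $v_y = v_{kx} = v_x$; and (iii), for $x$ with $m = \mathrm{ord}(x) \not\equiv 0 \pmod 4$, follows by choosing via CRT an integer $k$ with $k \equiv 1 \pmod m$, $k \equiv 3 \pmod 4$, and $\gcd(k, l) = 1$, so that $v_x = v_{kx} = -v_x$ forces $v_x = 0$.

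The main technical point is verifying existence of the auxiliary $k$ in the CRT arguments. Compatibility of the congruences is immediate in~(ii), and reduces in~(iii) to a short case-check distinguishing $m$ odd from $m \equiv 2 \pmod 4$; in each case, the two congruences $k \equiv 1 \pmod m$ and $k \equiv 3 \pmod 4$ agree on $\gcd(m,4)$ and so admit a joint solution $k_0$ modulo $m' := \mathrm{lcm}(m,4)$ with $\gcd(k_0, m') = 1$. Coprimality with $l$ is then secured by the standard observation that the arithmetic progression $k_0 + m'\mathbb{Z}$ attains every residue modulo any prime $p \nmid m'$, so a further CRT over the finitely many primes dividing $l$ but not $m'$ produces the desired $k$ with $\gcd(k, l) = 1$.
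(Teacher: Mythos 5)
Your proof is correct, but it runs along a genuinely different track from the paper's. The paper proves (ii) and (iii) by comparing two individual entries of $E^*u$ (where $u=Ev$): it writes $nv_x=(E^*u)_x$ as a polynomial in a primitive $m$-th root of unity with coefficients in $\mathbb{Q}(i)$ and invokes the irreducibility of $\Phi_m^1(x)$ (respectively $\Phi_r(x)$, $r\not\equiv 0\pmod 4$) over $\mathbb{Q}(i)$ — the former being a nonstandard fact imported from \cite{kadyan2021integral} — to transport the root $w$ to $w^k$ and deduce $v_x=v_y$ (respectively $v_x=v_{-x}$, which combines with (i)). You instead apply the automorphisms $\sigma_k\in\mathrm{Gal}(\mathbb{Q}(w_l)/\mathbb{Q})$, $l=\exp(\Gamma)$, to the entries of $Ev$, track the sign $\sigma_k(i)=i^k$, and use the nonsingularity of the character matrix (Lemma~\ref{OrthogonalBasis}) to extract the single pointwise relation $v_{ky}=\varepsilon_k v_y$, from which all three parts drop out by choosing suitable $k$. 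The two arguments are two faces of the same Galois-theoretic fact — $\sigma_k$ fixes $i$ exactly when $k\equiv 1\pmod 4$, which is what the splitting $\Phi_m=\Phi_m^1\Phi_m^3$ over $\mathbb{Q}(i)$ encodes — but yours buys a unified treatment of (i)--(iii) and needs only the standard irreducibility of $\Phi_l$ over $\mathbb{Q}$, at the price of the CRT bookkeeping required to produce $k$ coprime to $l$ with the prescribed residues modulo $m$ and $4$; the paper avoids that bookkeeping by working order-by-order with minimal polynomials over $\mathbb{Q}(i)$. All the delicate points in your version check out: $\sigma_k(i)=i^k$ legitimately requires $4\mid l$, which you ensure by disposing of the case $l\not\equiv 0\pmod 4$ separately via $i\notin\mathbb{Q}(w_l)$ (forcing $v=0$ there), multiplication by $k$ is a bijection of $\Gamma$ since $\gcd(k,l)=1$, and the congruence systems in (ii) and (iii) are compatible as you describe.
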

\begin{proof} Let $E_x$ and $E_y$ denote the column vectors of $E$ indexed by $x$ and $y$, respectively, and assume that $u=Ev \in \mathbb{Q}^n$.
\begin{enumerate}[label=(\roman*)]
\item Since $v=\frac{1}{n}E^*u$, we have
 $$v_x = \frac{1}{n}(E^* u)_x \Rightarrow \overline{v}_x= \frac{1}{n} \overline{(E^* u_x)} = - \frac{1}{n} (E^* u)_{-x} = -v_{-x}.$$
 Since $v_x=\overline{v}_x$, it gives $v_x=-v_{-x}$ for all $x \in \Gamma$.
\item If $\Gamma(4) = \emptyset$ then there is nothing to prove. Now assume that $\Gamma(4)\neq\emptyset$, so that $exp(\Gamma)\equiv 0 \pmod 4$.
Let $x,y \in \Gamma(4)$ and $x \approx y$. Then there exists $k\in G_m^1(1)$ such that $y=x^k$, where $m=ord(x)=ord(y)$. Assume that $x\neq y$, so that $k\geq 2$.
Using Lemma ~\ref{Basic}, entries of $E_x$ and $E_y$ are $i$ times an $m^{th}$ root of unity. Fix a primitive $m^{th}$ root of unity $w$, and express each entry of $E_x$ and $E_y$ in the form $iw^j$ for some $j\in \{ 0,1,...,m-1\}$. 
Thus $$nv_x= (E^*u)_x= \sum_{j=0}^{m-1} a_j w^j,$$ where $a_j\in \mathbb{Q}(i)$ for all $j$. Thus $w$ is a root of the polynomial $p(x)= \sum\limits_{j=0}^{m-1} a_j x^j-nv_x \in \mathbb{Q}(i)[x]$. Therefore, $p(x)$ is a multiple of the irreducible polynomial $\Phi_m^1(x)$, and so $w^k$ is also a root of $p(x)$, because of $k\in G_m^1(1)$.
As $y=x^k$ implies that $\psi_{y}(a)=\psi_{x}(a)^k$ for all $a\in \Gamma$, we have $(E^*u)_y= \sum\limits_{j=0}^{m-1} a_j w^{kj}$. Hence 
$$0 =p(w^k)=  \sum\limits_{j=0}^{m-1} a_j w^{kj}-nv_x= (E^*u)_y -nv_x=nv_y-nv_x \Rightarrow v_x=v_y.$$

\item Let $x\in \Gamma \setminus \Gamma(4)$ and $r=ord(x) \not\equiv 0 \pmod 4$. Fix a primitive $r^{th}$ root $w$ of unity, and express each entry of $E_x$ in the form $iw^j$ for some $j\in \{ 0,1,...,r-1\}$. 
Thus $$nv_x= (E^*u)_x= \sum_{j=0}^{r-1} a_j w^j,$$ where $a_j\in \mathbb{Q}(i)$ for all $j$. Thus $w$ is a root of the polynomial $p(x)= \sum\limits_{j=0}^{r-1} a_j x^j-nv_x \in \mathbb{Q}(i)[x]$. Therefore, $p(x)$ is a multiple of the irreducible polynomial $\Phi_r(x)$, and so $w^{-1}$ is also a root of $p(x)$. Since $\psi_{-x}(a)=\psi_{x}(a)^{-1}$ for all $a\in \Gamma$, therefore $(E^*u)_{-x}= \sum\limits_{j=0}^{r-1} a_j w^{-j}$. Hence $$0 =p(w^{-1})=  \sum\limits_{j=0}^{r-1} a_j w^{-j}-nv_x= (E^*u)_{-x} -nv_x=nv_{-x}-nv_x,$$ implies that $v_x=v_{-x}$. This together with Part $(i)$ imply that $v_x=0$ for all $x\in \Gamma \setminus \Gamma(4)$.
\end{enumerate} 
\end{proof}

\begin{theorem}\label{neccori}
Let $\Gamma$ be an abelian group. The oriented Cayley graph  $Cay(\Gamma, S)$ is integral if and only if $S \in \mathbb{D}(\Gamma)$.
\end{theorem}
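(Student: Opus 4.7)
The sufficiency is immediate from Theorem~\ref{OrientedChara}, so we concentrate on the necessity. Assume $Cay(\Gamma,S)$ is integral with $S$ skew-symmetric. If $exp(\Gamma)\not\equiv 0\pmod 4$, then Theorem~\ref{ori4} forces $S=\emptyset$, and since $\mathbb{D}(\Gamma)=\{\emptyset\}$ in that case, we are done. Hence we may assume $exp(\Gamma)\equiv 0\pmod 4$, so $\Gamma(4)\neq \emptyset$.

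Our plan is to convert the integrality of the spectrum into a rationality statement about a single rational vector and then feed that vector into Lemma~\ref{Ori4Nec}. Define $v\in \mathbb{Q}^n$, indexed by $\Gamma$, by $v_x=1$ for $x\in S$, $v_x=-1$ for $x\in -S$, and $v_x=0$ otherwise; this is well-defined because $S\cap(-S)=\emptyset$. From the definition $E_{\alpha,x}=i\psi_\alpha(x)$ and Corollary~\ref{OriEig}, a direct computation gives $(Ev)_\alpha=i\sum_{s\in S}\bigl(\psi_\alpha(s)-\psi_\alpha(-s)\bigr)=\mu_\alpha$ for every $\alpha\in \Gamma$. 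The hypothesis that $Cay(\Gamma,S)$ is integral places $Ev\in \mathbb{Z}^n\subset \mathbb{Q}^n$, so all three conclusions of Lemma~\ref{Ori4Nec} become available for $v$.

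Part~(iii) of that lemma yields $v_x=0$ whenever $x\notin \Gamma(4)$, so $S\subseteq \Gamma(4)$. Part~(ii) yields $v_x=v_y$ whenever $x\approx y$ in $\Gamma(4)$, so membership in $S$ is constant along each $\approx$-class: if $x\in S$ and $y\approx x$, then $v_y=v_x=1$, forcing $y\in S$. Therefore $S$ is a union of $\approx$-classes $\llbracket x_j \rrbracket$ with $x_j\in \Gamma(4)$, and combined with the hypothesized skew-symmetry this gives $S\in \mathbb{D}(\Gamma)$.

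The real obstacle in this programme is Lemma~\ref{Ori4Nec}, already proved in the excerpt; it rests on the irreducibility of $\Phi_m^1(x)$ over $\mathbb{Q}(i)$ when $4\mid m$, and it is precisely this splitting of $\Phi_m(x)$ that explains why the finer equivalence $\approx$, rather than $\sim$, governs integrality in the oriented setting. Given Lemma~\ref{Ori4Nec} and Theorem~\ref{ori4}, the remaining work is the short translation between the vector $v$ and the eigenvalue formula from Corollary~\ref{OriEig} sketched above.
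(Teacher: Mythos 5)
Your proposal is correct and follows essentially the same route as the paper: the paper also reduces to the case $exp(\Gamma)\equiv 0\pmod 4$ via Theorem~\ref{ori4}, forms the same $\pm 1$ indicator vector (called $e_S$ there), notes that integrality gives $Ee_S\in\mathbb{Q}^n$, and invokes Lemma~\ref{Ori4Nec}. Your write-up merely makes the computation $(Ev)_\alpha=\mu_\alpha$ and the passage from the lemma's conclusions to $S\in\mathbb{D}(\Gamma)$ more explicit.
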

\begin{proof}
If $exp(\Gamma) \not\equiv 0 \pmod 4$ then by Theorem ~\ref{ori4}, we have $S = \emptyset$, and so $S \in \mathbb{D}(\Gamma)$. Now assume that $exp(\Gamma) \equiv 0 \pmod 4$. Let $e_S$ be the vector in $\mathbb{R}^n$ whose $j^{th}$ coordinate is $1$ and $(n-j)^{th}$ coordinate is $-1$ for all $j\in S$. As $Cay(\Gamma, S)$ is integral implies that $Ee_S \in \mathbb{Q}^n$, so by Lemma ~\ref{Ori4Nec}, we get $S \in \mathbb{D}(\Gamma)$, as desired. Converse follows from Theorem~\ref{OrientedChara}.
\end{proof}

\begin{theorem}
Let $\Gamma$ be an abelian group. The mixed Cayley graph  $Cay(\Gamma, S)$ is integral if and only if $S \setminus \overline{S} \in \mathbb{B}(\Gamma)$ and $\overline{S} \in \mathbb{D}(\Gamma)$.
\end{theorem}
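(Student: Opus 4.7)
The plan is to prove this as a direct corollary of the three characterization results already established in the paper, combining the decomposition theorem with the two separate characterizations for simple and oriented Cayley graphs. No new machinery is needed.

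First I would invoke Theorem~\ref{smallchara}, which splits the integrality of the mixed Cayley graph $Cay(\Gamma,S)$ into the conjunction of two statements: the simple Cayley graph $Cay(\Gamma, S\setminus \overline{S})$ is integral, \emph{and} the oriented Cayley graph $Cay(\Gamma, \overline{S})$ is integral. This reduces the ``if and only if'' in the target statement into two independent equivalences, each concerning a graph whose symbol set has a well-defined symmetry type (fully symmetric in the first case, fully skew-symmetric in the second).

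Next I would handle each of the two reduced problems with the appropriate characterization. For the symmetric part, $S\setminus\overline{S}$ is a symmetric subset of $\Gamma$ by construction, so Theorem~\ref{Cayint} of Alperin--Peterson/Bridges--Mena applies verbatim: $Cay(\Gamma, S\setminus \overline{S})$ is integral if and only if $S\setminus \overline{S} \in \mathbb{B}(\Gamma)$. For the skew-symmetric part, $\overline{S}$ is skew-symmetric by construction, so Theorem~\ref{neccori} of the present paper applies: $Cay(\Gamma, \overline{S})$ is integral if and only if $\overline{S}\in \mathbb{D}(\Gamma)$. Chaining these three equivalences yields exactly the claimed characterization.

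There is essentially no main obstacle here, because the heavy lifting has already been done. The preparatory Theorem~\ref{smallchara} relied on the Galois-theoretic observation that $\lambda_\alpha$ and $\mu_\alpha$ are the rational and imaginary parts of $\gamma_\alpha$ (up to a factor) and are themselves algebraic integers; the sufficiency in the $\mathbb{D}(\Gamma)$ direction was handled in Theorem~\ref{OrientedChara} via Lemma~\ref{integerEigenvalue}; and the necessity for the oriented case was settled in Theorem~\ref{neccori} using the irreducibility of $\Phi_n^1(x)$ and $\Phi_n^3(x)$ over $\mathbb{Q}(i)$. The only real task in writing the proof is to state these three invocations in the correct logical order and to note that $S\setminus\overline{S}$ and $\overline{S}$ are the canonical symmetric/skew-symmetric decomposition of $S$, so applying the two characterizations to the pieces is legitimate. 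A two-line proof suffices.
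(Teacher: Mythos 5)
Your proposal is correct and is essentially identical to the paper's own proof: both reduce via Theorem~\ref{smallchara} to the integrality of $Cay(\Gamma, S\setminus\overline{S})$ and $Cay(\Gamma,\overline{S})$, then invoke Theorem~\ref{Cayint} and Theorem~\ref{neccori} respectively. No further comment is needed.
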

\begin{proof}
By Theorem~\ref{smallchara}, $Cay(\Gamma, S)$ is integral if and only if both $Cay(\Gamma, S \setminus \overline{S})$ and $Cay(\Gamma, \overline{S})$ are integral. Thus the result follows from Theorem~\ref{Cayint} and Theorem ~\ref{neccori}.
\end{proof}


\end{document}